
\documentclass[11pt]{amsproc}
\usepackage{eurosym}
\usepackage{amssymb}
\usepackage{amsfonts}
\usepackage{cmtt}
\usepackage{tikz,pgfplots}
\usepackage{mathdots}
\usetikzlibrary{arrows.meta}
\usepackage{xcolor}
\usepackage[paper=a4paper, margin=2.5cm, tmargin = 3cm]{geometry}
\usepackage{hyperref}
\hypersetup{
    unicode=false,
    pdftoolbar=true,
    pdfmenubar=true,
    pdffitwindow=false,
    pdfstartview={FitH},
    pdftitle={},
    pdfauthor={},
    pdfkeywords= {}
    pdfnewwindow=true,
    colorlinks=true,
    linkcolor=blue,
    citecolor=red,
    filecolor=magenta,
    urlcolor=cyan
}
\usepackage[capitalize]{cleveref}
\usepackage[ruled,linesnumbered, inoutnumbered, algosection]{algorithm2e}
\usepackage[format=plain, font=footnotesize]{caption}
\setcounter{MaxMatrixCols}{10}

\theoremstyle{plain}
\newtheorem{theorem}{Theorem}[section]

\newtheorem{corollary}[theorem]{Corollary}
\newtheorem{lemma}[theorem]{Lemma}
\newtheorem{proposition}[theorem]{Proposition}

\theoremstyle{definition}
\newtheorem*{acknowledgement}{Acknowledgement}

\newtheorem{definition}[theorem]{Definition}
\newtheorem{example}[theorem]{Example}
\newtheorem{remark}[theorem]{Remark}

\usepackage[backend=bibtex,maxbibnames = 99]{biblatex}
\addbibresource{references.bib}

\theoremstyle{remark}
\numberwithin{equation}{section}
\newcommand{\field}[1]{\mathbb{#1}}
\newcommand{\C}{\field{C}}

\newcommand{\N}{\field{N}}

\newcommand{\R}{\field{R}}
\newcommand{\Z}{\field{Z}}

\newcommand{\p}{\field{P}}

\renewcommand{\theta}{h}

\newcommand{\vol}{{\mathrm{vol}}}
\newcommand{\Conv}{{\mathrm{Conv}}}

\newcommand{\superscript}[1]{\ensuremath{^{\textrm{#1}}}}

\begin{document}

\title{The Algebraic Degree of Coupled Oscillators}

\author[Paul Breiding]{Paul Breiding\superscript{\,a}\footnote{\superscript{a} pbreiding@uni-osnabrueck.de; University of Osnabr\"uck, Albrechtstr.\ 28a, 49076 Osnabrück, Germany. Supported by DFG, German Research Foundation -- Projektnummer 445466444.}}

\author[Mateusz Micha{\l}ek]{Mateusz Micha{\l}ek\superscript{b}\footnote{\superscript{b} mateusz.michalek@uni-konstanz.de; University of Konstanz, Fach D 197, 78457 Konstanz, Germany. Supported by DFG, German Research Foundation -- Projektnummer 46757530.}}

\author[Leonid Monin]{Leonid Monin\superscript{c}\footnote{\superscript{c} leonid.monin@mis.mpg.de; Max Planck Institute for Mathematics in the Sciences, Inselstra\ss e 22, 04103 Leipzig, Germany.}}

\author[Simon Telen]{Simon Telen\superscript{d}\footnote{\superscript{d} simon.telen@cwi.nl; Centrum Wiskunde \& Informatica (CWI), Science Park 123, 1098 XG Amsterdam, Netherlands. Supported by a Veni grant from the Netherlands Organisation for Scientific Research (NWO).}}

\keywords{Khovanskii bases, Graver bases, toric ideals, discriminants, homotopy continuation}

\begin{abstract}
Approximating periodic solutions to the coupled Duffing equations amounts to solving a system of polynomial equations. The number of complex solutions measures the algebraic complexity of this approximation problem. Using the theory of Khovanskii bases, we show that this number is given by the volume of a certain polytope. We also show how to compute all solutions using numerical nonlinear algebra.
\end{abstract}

\maketitle


\section{Introduction} \label{sec:1}

Counting the number of solutions to a system of nonlinear equations is a ubiquitous problem in mathematics and its applications. For instance, this may provide bounds on the complexity of an algorithm, or a proof of completeness for a set of computed solutions. Typically, equations seen in applications are highly structured, and naive root counts such as B\'ezout's theorem largely overestimate the actual number. Moreover, standard techniques count solutions over algebraically closed fields, such as $\C$, although applications often care about other fields, like $\R$. Apart from counting solutions, one might be interested in identifying parameter regions with a large (or small) number of real solutions, and in actually computing these solutions. Recent theory and algorithms in computational algebraic geometry can be used as a powerful tool to analyse and solve structured equations. Our aim in this paper is to develop such an analysis for equations coming from the study of limit cycles in physics. At the same time, we aspire to illustrate how the used techniques can be applied in a more general context. Key words include discriminants, Khovanskii bases and homotopy continuation.

We briefly sketch our setting. The starting point is a single oscillator, described by an ordinary differential equation in the unknown displacement function $t \mapsto X(t) \in \R$:
\begin{equation} \label{eq:duffing}
\ddot{X} + \alpha  \, X + \beta \, X^3 - \gamma \, \cos(\omega t) + \delta \, \dot{X} \, =  \, 0.
\end{equation}
This is known as the \emph{Duffing equation}, which is a standard example of a nonlinear dynamical system showing chaotic behaviour for certain ranges of the parameters $\alpha, \beta, \gamma, \delta, \omega$. We are interested in the non-chaotic regime where there is a periodic steady state solution $X(t) = X(t + 2 \pi k), k \in \Z$. This is inspired by research in physics; see  \cite{kovsata2022harmonicbalance} and references therein.

\emph{Coupled} Duffing oscillators are described by the following system of differential equations:
\begin{equation} \label{eq:coupledduff}
\ddot{X}_i + \alpha  \, X_i + \beta \, X_i^3 - \gamma \, \cos(\omega_i t) + \delta \, \dot{X}_i \,  + \sum_{\ell \neq i} J_{i\ell} \, X_\ell =  \, 0, \quad i = 1, \ldots, N.
\end{equation}
Here the unknown functions are $X_1(t), \ldots, X_N(t)$. There is one frequency parameter $\omega_i$ for each~$i$, and $J_{i \ell}$ is a new matrix of parameters representing the \emph{coupling} between our $N$ oscillators. Again, the periodic steady states of this dynamical system are of interest. 

The method of \emph{harmonic balancing} remarkably turns the problem of approximating periodic solutions to \eqref{eq:coupledduff} into a system of algebraic equations. We explain this in detail in \cref{sec:2}. In short, for every pair of positive integers $M$ and $N$ we get a \emph{system of polynomial equations} with \emph{parameters}. The number $N$ is the number of oscillators in \cref{eq:coupledduff}, and $M$ is the number of \emph{leading frequencies} that are taken into account. The parameters $\alpha, \beta, \gamma, \delta, J$ are real numbers.

Our paper focuses on the case where $M = 1$ and $N$ is arbitrary. In this case, the associated system of polynomial equations has $2N$ variables $u=(u_1,\ldots,u_N)$ and $v=(v_1,\ldots,v_N)$ and $2N$ equations. Explicitly, the equations are of the form
$$\mathcal F_{N}(u,v) =
\begin{bmatrix}
\,f_{1}(u,v),\,&
\,g_{1}(u,v),\,&
\cdots&
\,f_{N}(u,v),\,&
\,g_{N}(u,v)\,
\end{bmatrix} = 0
$$
with
\begin{align}\label{eq:N}
\begin{split}
f_i(u,v) &=a_{1,i} u_i \, (u_i^2+v_i^2) + a_{2,i} \, u_i + a_{3,i} \, v_i + a_{4,i} + \sum_{j\ne i} c_{j,i}v_j,\\
g_i(u,v) &= b_{1,i} \, v_i \, (u_i^2+v_i^2) + b_{2,i} \, u_i \, + b_{3,i} \, v_i \, + b_{4,i} +  \sum_{j\ne i} d_{j,i}u_j.
\end{split}
\end{align}
The parameters of $\mathcal F_N$ are the $2(N+3)N$ numbers $a_i:=(a_{1,i},\ldots, a_{4,i})$, $b_i:=(b_{1,i},\ldots, b_{4,i})$, $c_i:=(c_{j,i}\mid j\neq i)$, and $d_i:=(d_{j,i}\mid j\neq i)$ for $i=1,\ldots,N$. They are obtained from the parameters $\alpha,\beta,\gamma,\delta, J$ above. Physically meaningful parameters are real, but we also consider complex parameters. Our main result is the following theorem.
\begin{theorem}\label{thm:main}
For general parameters $(a_i,b_i,c_i,d_i \mid i= 1,\ldots, N)\in \mathbb C^{2(N+3)N}$ the number of complex solutions of the above system of equations is
$$\#\{(u,v)\in\mathbb C^{2N} \mid \mathcal F_N(u,v)=0\} \, = \,  5^N.$$
\end{theorem}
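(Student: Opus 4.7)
The plan is to apply the Bernstein--Kushnirenko--Khovanskii framework combined with the product structure of the equations. The key observation is that introducing the auxiliary polynomials $X_i := u_i(u_i^2+v_i^2)$ and $Y_i := v_i(u_i^2+v_i^2)$ turns each $f_i, g_i$ into an \emph{affine linear} form in the variables $(u_j, v_j, X_j, Y_j)_{j=1}^N$. The parametrization $\phi \colon \C^{2N} \to \C^{4N}$, $(u,v) \mapsto (u_j, v_j, X_j, Y_j)_j$, factors as a Cartesian product of the analogous single-oscillator maps, so the image variety $V := \phi(\C^{2N})$ decomposes as $V = V_1 \times \cdots \times V_N$ with each $V_i \subset \C^4$ a two-dimensional surface. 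Solutions to $\mathcal{F}_N = 0$ correspond bijectively to intersections of $V$ with the $2N$ parameter-dependent affine hyperplanes $H_{f_i}, H_{g_i}$ in $\C^{4N}$.

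First I would settle the case $N=1$, namely that a single Duffing oscillator has $5$ generic complex solutions. Introducing $R_1 := u_1^2+v_1^2$, the equations $f_1 = g_1 = 0$ become linear in $(u_1,v_1)$ with coefficients linear in $R_1$. Cramer's rule then yields $u_1, v_1$ as rational functions of $R_1$ with linear numerators and a common quadratic denominator; substituting back into $R_1 = u_1^2+v_1^2$ and clearing denominators produces a polynomial of degree $5$ in $R_1$, whose five simple roots (for generic parameters) each lift uniquely to a solution $(u_1,v_1)$. The same count can be recovered as the mixed volume of the Newton polytopes of $f_1$ and $g_1$ in $\R^2$.

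For general $N$ I would deduce $5^N$ by a Khovanskii-basis argument. Consider the subalgebra $A \subset \C[u,v]$ generated by $\mathcal{B} := \{1, u_j, v_j, X_j, Y_j : j=1,\dots,N\}$, so that every $f_i, g_i$ lies in its degree-one piece. Choose a product valuation on $\C[u,v]$ for which $\mathcal{B}$ is a Khovanskii basis of $A$, and identify the associated Newton--Okounkov polytope whose normalized volume equals the generic intersection number. At the decoupled parameters $c_{j,i} = d_{j,i} = 0$, the system splits into $N$ independent copies of the $N=1$ problem, giving exactly $5^N$ solutions; by flatness of the toric degeneration furnished by the Khovanskii-basis machinery, the generic count for coupled parameters is preserved, and equals $5^N$.

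The main obstacle is verifying the Khovanskii-basis property and correctly computing the Newton--Okounkov polytope volume. A priori the coupling monomials $v_j, u_j$ (for $j \neq i$) enlarge the Newton polytopes of $f_i, g_i$ by contributing vertices in directions transverse to the $i$-th block, and could in principle raise the BKK bound above $5^N$ or introduce solutions at infinity in the toric compactification. Showing that these coupling terms remain subleading relative to the chosen valuation --- equivalently, that the coupling vertices do not contribute to the mixed-volume coefficient of $\lambda_1 \cdots \lambda_{2N}$ in $\vol\bigl(\sum \lambda_i P_i\bigr)$ --- is the central combinatorial step. Once it is established, Khovanskii's theorem combined with the decoupled baseline count yields the desired equality $5^N$ for general parameters.
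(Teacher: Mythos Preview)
Your overall strategy --- linearise via the map $(u,v)\mapsto (u_j,v_j,X_j,Y_j)_j$, get the lower bound $5^N$ from the decoupled specialisation, and get the upper bound from a Khovanskii-basis computation of the degree of the image variety --- is exactly the route the paper takes. So the architecture is right. A few points of divergence and of concern, however.

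First, a concrete error: for $N=1$ the mixed area of the Newton polygons of $f_1$ and $g_1$ is $9$, not $5$. The paper notes this explicitly; Bernstein's bound overshoots here because the system has forced solutions at infinity in the toric compactification. Your Cramer-in-$R_1$ argument, on the other hand, is a clean and correct alternative to the paper's B\'ezout-minus-multiplicity proof, and is worth keeping.

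Second, the logical flow in your last full paragraph is tangled. The decoupled specialisation gives only the \emph{lower} bound $\delta_N\ge 5^N$ by semicontinuity (the paper's Lemma on lower semicontinuity of the isolated-solution count). The toric degeneration coming from the Khovanskii basis is a different deformation --- of the algebra $S_\phi$ to its initial algebra --- and it is what delivers the \emph{upper} bound $\delta_N\le\deg(Y_N)=\mathrm{vol}(Q_N)$. These two degenerations live in different spaces and should not be merged into one ``flatness'' sentence.

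Third, and most important, you correctly flag the Khovanskii-basis verification as ``the main obstacle'' but do not actually carry it out. In the paper this is by far the hardest step: one fixes a specific monomial order, computes the toric ideal of the leading monomials, shows via a Graver-basis argument that this ideal is generated by the ideals $I(X_{\{i,j\}})$ for all two-element subsets (so the general-$N$ statement reduces to $N=2$), and then checks the subduction criterion for the ten explicit binomial generators at $N=2$. Your proposal contains none of this, and the hand-waving about ``coupling vertices remaining subleading'' does not substitute for it --- indeed, that phrasing again drifts toward a BKK/mixed-volume picture, which as noted already fails at $N=1$. Until you either perform the subduction check or give an independent argument that $\deg(Y_N)=5^N$, the upper bound is unproved.
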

The proof of  \cref{thm:main} is  given in \cref{sec:5}. The word ``general'' in this theorem means the following: the subset of parameters for which the statement of \cref{thm:main} does not hold, is contained in the vanishing set of a system of polynomials, where now the parameters are the variables. Let us denote this \emph{algebraic variety} by $\Delta \subset \C^{2(N+3)N}$. We call it the \emph{discriminant} associated to~$\mathcal F_N$. The discriminant $\Delta$ is a strict subvariety of the space of parameters. Therefore, the theorem holds for almost all choices of parameters $(a_i,b_i,c_i,d_i \mid i= 1,\ldots, N)\in \mathbb C^{2(N+3)N}$. Since the space of real parameters $\mathbb R^{2(N+3)N}$ is not contained in any subvariety of~$\mathbb C^{2(N+3)N}$, we must have $\mathbb R^{2(N+3)N}\not\subset \Delta$.
Therefore, \cref{thm:main} also holds for almost all real parameters.

We point out that Theorem \ref{thm:main} implies that the number of isolated complex solutions to~\eqref{eq:N} cannot exceed $5^N$. Although the theorem counts complex solutions, it follows from \cref{thm:mainN=1} that there are real parameters, for which we obtain exactly $5^N$ \emph{real} solutions. Hence our bound is optimal, also when counting real solutions.

The discriminant $\Delta$ contains the set of parameters at which the number of real solutions of $\mathcal F(u,v)=0$ is not locally constant (i.e., the number of real solutions changes when moving across these parameter values).
Computing equations for $\Delta$ is hard in general. We give an explicit description in the case $N=1$, i.e.~for a single oscillator, in \cref{sec:discriminant}. We also match this with previous results from the physics literature~\cite{kovsata2022harmonicbalance}.

Even though real coefficients and real solutions are most meaningful in our physics application, passing to the complex numbers has some advantages. Ultimately, the goal in physics is to \emph{compute} the zeros of $\mathcal F_N$ for a specific set of (real) parameters. We can use methods from computational algebraic geometry to do this. In \cref{sec:6} we describe a \emph{homotopy continuation} algorithm for computing the solutions of $\mathcal F_N(u,v)=0$. Here, passing to the complex numbers is crucial. The discriminant $\Delta$ has complex codimension at least 1 in $\mathbb C^{2(N+3)N}$, so it has real codimension at least two. Therefore, $\mathbb C^{2(N+3)N}\setminus \Delta$ is path connected, while the complement of the discriminant in $\mathbb R^{2(N+3)N}$ is disconnected. Connectedness is crucial for homotopy continuation.

Our proof of Theorem \ref{thm:main} uses the theory of \emph{Khovanskii bases} \cite{kaveh2012newton}. Root counts from Khovanskii bases can be turned into homotopy algorithms via a \emph{toric degeneration} \cite{burr2020numerical}. We emphasize that the homotopy we present in Section \ref{sec:6} does \emph{not} come from such a degeneration. It exploits the specific structure of our equations \eqref{eq:N}, which makes it particularly efficient.

The article is outlined as follows. In \cref{sec:2} we derive polynomial equations for Duffing oscillators. In \cref{sec:3} we prove \cref{thm:main} in the case $N=1$. In \cref{sec:4} we recall the concept of Khovanskii bases, which we then use in \cref{sec:5} to prove \cref{thm:main}. In \cref{sec:6} we apply our results to construct a homotopy continuation algorithm.

\begin{acknowledgement}
We thank Jan Ko{\v{s}}ata, Javier del Pino, Toni Heugel, and Oded Zilberberg for many discussion which eventually led to this research project.
\end{acknowledgement}

\section{Polynomial equations for periodic steady states} \label{sec:2}
We explain how to get to the system of polynomial equations $\mathcal F_N$ for periodic steady states.
The main results of our paper are in the case when we have $M = 1$ leading frequencies and~$N\geq 1$ oscillators. In this section, however, we assume that $M\geq 1$. This gives a more complete picture, and we anticipate follow-up works considering this general case.

\subsection{Single oscillator ($N=1$) with $M$ leading frequencies}
Let us first discuss the case $N=1$, where we only have a single oscillator and no coupling.

After a change of coordinates in \Cref{eq:duffing} we can assume that a potential periodic solution $X(t)$ is oscillating around $0$. We make the ansatz
\begin{equation}
X(t) \, = \, \sum_{k=1}^\infty \, u^{(k)} \, \sin(k \cdot t) + v^{(k)} \, \cos(k \cdot t).
\end{equation}
Assuming moreover that higher frequencies are negligible, we truncate this sum at $k = M$ and write $X^{(M)}(t)$ for the corresponding approximation. Plugging this into the left hand side of the differential equation in \eqref{eq:duffing} we obtain an equation of the form
\begin{equation}\label{F_eq}
F^{(M)}(u,v,t)=0,
\end{equation}
where $u = (u^{(1)}, \ldots, u^{(M)})$ and $v = (v^{(1)}, \ldots, v^{(M)})$. We obtain approximate values of $u^{(k)}, v^{(k)}$ for $k = 1, \ldots, M$ by requiring the $M$ leading frequencies in $F^{(M)}(u,v,t)$ to vanish.
Then, Fourier expansion allows to write
\begin{equation} \label{eq:FM}
F^{(M)}(u,v,t) \, = \, \sum_{k=1}^\infty \,  f^{(k)}(u,v) \cdot \sin(k \cdot t) + g^{(k)}(u,v) \cdot \cos(k\cdot t).
\end{equation}
The functions $f^{(k)}(u,v), g^{(k)}(u,v)$ are the Fourier coefficients given by
\[ f^{(k)}= \frac{1}{\pi} \int_{-\pi}^\pi F^{(M)}(u,v,t) \sin(k \cdot t) dt, \quad g^{(k)}= \frac{1}{\pi} \int_{-\pi}^\pi F^{(M)}(u,v,t) \cos(k \cdot t) dt. \]
Since \eqref{eq:duffing} is a polynomial equation in $(X,\dot X, \ddot X)$, we obtain from \eqref{F_eq} a system of $2M$ polynomial equations in $2M$ variables $u = (u^{(1)}, \ldots, u^{(M)})$ and $v = (v^{(1)}, \ldots, v^{(M)})$:
\begin{equation} \label{eq:sysduff}
\begin{bmatrix}
\ f^{(1)}(u,v)\ \\[0.2em]
\ g^{(1)}(u,v)\ \\[0.2em]
\vdots\\[0.2em]
\ f^{(M)}(u,v)\ \\[0.2em]
\ g^{(M)}(u,v)\
\end{bmatrix} = 0.
\end{equation}

\begin{remark}We point out that $f^{(k)}, g^{(k)}$ may be nonzero functions in $u, v$ also for $k > M$, but we only impose the first $M$ terms in \eqref{eq:FM} to vanish. This way, the resulting system of equations~\eqref{eq:sysduff} has as many equations as unknowns, and is solvable in general.
\end{remark}

Next, we work out the single oscillator polynomial system from \cref{eq:sysduff} in the case $M=1$.

\begin{example}[Polynomial system for a single oscillator and $M = 1$ leading frequencies] \label{ex:Neqto1}
We consider the first non-trivial case in which $X(t)$ is approximated by a single sine and cosine function: $M=1$. We simplify notation by setting $u^{(1)}= u$ and $v^{(1)}= v$. Plugging $X^{(1)}(t)$ into the Duffing equation \eqref{eq:duffing} gives%
\begin{align*}
F^{(1)}(u,v,t) = &-\gamma \cos(\omega t)+ \beta \cos(t)^3 v (v^2-3 u^2) -\beta \sin(t) \cos(t)^2 u  (u^2-3v^2) \\
&+\cos(t)(3\beta u^2v+\delta u +v(\alpha-1))+\sin(t) (\beta u^3+(\alpha-1)u-\delta v).
\end{align*}
The coefficients of the polynomials $f^{(1)}, g^{(1)}$ are obtained by multiplying $F^{(1)}(u,v,t)$ with $\sin(t)$ and $\cos(t)$, respectively, and integrating out $t$. For instance, the coefficient standing with $u^3$ in~$f^{(1)}$ is computed as
\[ \frac{1}{\pi} \int_{-\pi}^\pi -\beta \, \big(\sin(t)^2 \cos(t)^2 - \sin(t)^2\big) \, \mathrm d t \, = \, \frac{3}{4} \beta. \]
This way one computes that \eqref{eq:sysduff} is given by $f(u,v) = g(u,v) = 0$, where
\begin{align*}
f(u,v) &= \frac{3}{4} \beta \,  u \, (u^2+ v^{2})+(\alpha-1)\,  u-\delta  \, v , \\
g(u,v) &= \frac{8 \gamma  \sin\! \left(\pi  \omega \right) \omega +3 \pi \left(\omega +1\right) \left(\omega -1\right) \left(\beta \, v \, (  v^{2} +u^2) +(\alpha-1) \, \frac{4 \alpha \, v }{3}+\frac{4 \delta  \, u}{3}\right) }{\left(4 \omega^{2}-4\right) \pi}.
\end{align*}
Note that the polynomials $f^{(k)}, g^{(k)}$ are not all zero for $k > 1$. In fact, we compute
\[g^{(k)} = \frac{(-1)^{k-1} 2 \gamma \sin(\pi \omega)\omega}{\pi(  \omega^2 - k^2)}, \quad k \geq 4. \]
Thus, after renaming coordinates we have
\begin{align}\label{eq:N=1}
\begin{split}
f(u,v) &=a_{1} u \, (u^2+v^2) + a_{2} \, u + a_{3} \, v + a_{4},\\
g(u,v) &= b_{1} \, v \, (u^2+v^2) + b_{2} \, u \, + b_{3} \, v \, + b_{4},
\end{split}
\end{align}
which is \Cref{eq:N} for $N=1$.
\end{example}

\subsection{Polynomial equations for $N$ coupled oscillators}
The approach from the previous subsection is extended to the coupled Duffing equations given by the system of differential equations in \Cref{eq:coupledduff}.
The periodic ansatz is now
$$
X_i(t) \, = \,  \sum_{k=1}^\infty \, u_i^{(k)} \, \sin(k \cdot t) + v_i^{(k)} \, \cos(k \cdot t), \quad i = 1, \ldots, N.
$$
Truncating after $M$ terms and plugging this into \eqref{eq:coupledduff} gives $F_i^{(M)}(u,v,t), i = 1, \ldots, N$, with Fourier coefficients $f_i^{(k)}, g_i^{(k)}$. The notation $u, v$ now comprises all $u_i^{(k)}, v_i^{(k)}$. To find $u,v$, one solves a system of $2MN$ equations in $2MN$ unknowns. In matrix form, this is given by
\begin{equation} \label{eq:coupledduffeqns}
\begin{bmatrix}
\,f_1^{(1)}(u,v)\ & \cdots & \ f_N^{(1)}(u,v)\ \\[0.25em]
\ g^{(1)}(u,v)\ & \cdots & \ g_N^{(1)}(u,v)\ \\[0.25em]
&\vdots\\[0.2em]
\ f_1^{(M)}(u,v)\ & \cdots & \ f_N^{(M)}(u,v)\ \\[0.25em]
\ g_1^{(M)}(u,v)\ & \cdots & \ g_N^{(M)}(u,v)\
\end{bmatrix} = 0,
\end{equation}
In particular, \Cref{eq:sysduff} is the special case of \Cref{eq:coupledduffeqns} for $N=1$, and \Cref{eq:N} is the special case $M=1$.

\section{The case of a single oscillator} \label{sec:3}

In this section we prove \Cref{thm:main} when $N=1$. In this case we have the two degree three polynomials in two variables from \eqref{eq:N=1}:
\begin{align*}
\mathcal F_{1}(u,v) = \begin{bmatrix}\ f(u,v)\ \\ \ g(u,v)\ \end{bmatrix}
= \begin{bmatrix}
\ a_1 u \, (u^2+v^2) + a_2 \, u + a_3 \, v + a_4\ \\
\ b_1 \, v \, (u^2+v^2) + b_2 \, u \, + b_3 \, v \, + b_4\
\end{bmatrix}.
\end{align*}
Here $u,v$ are variables and $a=(a_1,\ldots,a_4), b=(b_1,\ldots,b_4)$ are $2(N+3)N=2\cdot 4=8$ parameters. The main result of this section is the following theorem, which is a special case of \Cref{thm:main}.
\begin{theorem}\label{thm:mainN=1}
For general parameters $(a,b)\in\mathbb C^8$ we have
$$\#\{(u,v)\in\mathbb C^2 \mid \mathcal F_{1}(u,v)=0\} = 5.$$
Moreover, there are choices $(a,b)\in\mathbb R^8$ for which all five solutions are real.
\end{theorem}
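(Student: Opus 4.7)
I split the statement into a generic complex count of $5$ via a Bézout-style calculation, and the exhibition of an explicit real parameter value with all $5$ solutions real.

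For the generic complex count, the key observation is that the cubic top forms of $f$ and $g$ are $a_1 u(u^2+v^2)$ and $b_1 v(u^2+v^2)$; they share the factor $u^2+v^2$. Hence
\[
C_1(u,v) \;:=\; b_1 v\, f(u,v) \;-\; a_1 u\, g(u,v)
\]
is a polynomial of total degree at most $2$, and $V(f,g)\subseteq V(f,C_1)$. Homogenising to a cubic $f^h$ and a conic $C_1^h$ in $\mathbb{P}^2$, Bézout's theorem gives $\#(V(f^h)\cap V(C_1^h))=6$ counted with multiplicity, as long as they share no component---a generic condition, since $f^h$ is irreducible for generic $(a,b)$. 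I would next check that none of the three points at infinity of $V(f^h)$, namely $(0\!:\!1\!:\!0)$ and $(1\!:\!\pm i\!:\!0)$, lies on $V(C_1^h)$ for generic parameters; the values of $C_1^h$ at these points are $a_3 b_1$ and $-(a_1 b_2 + a_3 b_1) \pm i(a_2 b_1 - a_1 b_3)$, nonvanishing on a Zariski open set. Thus all six intersection points lie in $\mathbb{C}^2$. Conversely, on $\mathbb{C}^2$ the identity $a_1 u\, g = b_1 v\, f - C_1$ shows that every point of $V(f,C_1)$ with $u\ne 0$ also lies in $V(g)$; the only exception is the \emph{spurious} point $p_0 = (0,-a_4/a_3)$ (well-defined when $a_3 \ne 0$), where $g(p_0) \ne 0$ for generic parameters, and where a direct Jacobian computation shows $V(f)$ and $V(C_1)$ meet transversally. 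Subtracting this simple intersection yields $\#(V(f,g) \cap \mathbb{C}^2) = 6 - 1 = 5$ for generic $(a,b)$.

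For the existence of five real solutions, I would exhibit the explicit parameter $a_1=b_1=1$, $a_2=-1$, $a_3=a_4=b_4=0$, $b_2=1$, $b_3=-3$. With these values $f = u(u^2+v^2-1)$, so $V(f)\cap\mathbb{C}^2$ is the union of the line $\{u=0\}$ and the unit circle $\{u^2+v^2=1\}$. Substituting into $g = v(u^2+v^2) + u - 3v$: on $\{u=0\}$ one gets $v^3-3v=0$ with real roots $0,\pm\sqrt 3$, while on the circle one gets the linear equation $u = 2v$, meeting the circle at $\pm(2/\sqrt 5,\, 1/\sqrt 5)$. The line and circle themselves meet only at $(0,\pm 1)$, where $g = \mp 2 \ne 0$, so no further intersections arise. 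A short Jacobian check confirms that each of the five resulting points is a simple root. Hence $V(f,g)\cap\mathbb{C}^2$ consists of exactly $5$ points, all real, at this specific parameter.

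\textbf{Main obstacle.} The delicate step is confirming that the spurious point $p_0$ contributes multiplicity exactly $1$ to the intersection $V(f^h)\cap V(C_1^h)$, so that the subtraction $6-1=5$ is valid. This reduces to showing that a specific Jacobian determinant, viewed as a polynomial in the parameters, is not identically zero, which I would verify by a direct explicit computation of $\det[\nabla f\mid \nabla C_1]$ at $p_0$.
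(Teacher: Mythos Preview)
Your argument is correct and takes a genuinely different route from the paper's.

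For the generic count, the paper homogenises both $f$ and $g$ and applies B\'ezout directly: $3\times 3=9$ intersection points in $\mathbb{P}^2$, then shows that the two points at infinity $(1:\pm i:0)$ each have multiplicity at least $2$ (via a Jacobian computation in the chart $u=1$), leaving at most $9-4=5$ affine solutions. You instead exploit the syzygy $C_1=b_1vf-a_1ug$ to reduce to a cubic-conic intersection with B\'ezout number $6$, verify there are no common points at infinity, and subtract the single spurious affine point $p_0=(0,-a_4/a_3)$. Both arguments hinge on a transversality check (theirs at infinity, yours at $p_0$), but yours trades a multiplicity-$\geq 2$ verification for a multiplicity-$=1$ verification, which is slightly cleaner.

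For the real-solution claim, the paper appeals to an external reference for an instance with five real roots; its own explicit example ($a_1=b_1=a_3=b_2=1$, all others zero) yields five \emph{complex} solutions, only three of which are real. Your example with $f=u(u^2+v^2-1)$ and $g=v(u^2+v^2)+u-3v$ produces five real solutions directly, so your treatment of this half of the theorem is in fact more self-contained than the paper's.

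One small remark on your ``main obstacle'': the simplicity of $p_0$ is only needed if you want the exact count $6-1=5$ from the B\'ezout side alone. Since you already exhibit a parameter with five isolated solutions, lower semicontinuity (Lemma~\ref{lem:LSC}) gives $\delta_1\geq 5$, and the inclusion $V(f,g)\subseteq V(f,C_1)\setminus\{p_0\}$ (using only $g(p_0)\neq 0$, not simplicity) already gives $\delta_1\leq 5$. So the Jacobian check at $p_0$ can be bypassed entirely.
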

Furthermore, we provide an explicit description of the discriminant for $\mathcal F_1$ in \Cref{sec:discriminant} below.  Our description, among others, lets us recover the experimental pictures made in \cite{kovsata2022harmonicbalance}.



Let us now move towards the proof of \cref{thm:mainN=1}.
The theorem considers a system of equations defined on $\C^2$. It is standard practice in algebraic geometry to homogenize $f$ and $g$ to obtain a system of two homogeneous degree three equations on the projective plane $\p^2$:
\begin{align}\label{eq:N=1hom}
\begin{split}
f_{z}&=a_1 u \, (u^2+v^2) + a_2 \, uz^2 + a_3 \, vz^2 + a_4z^3 = 0,\\
g_{z}&=b_1 \, v \, (u^2+v^2) + b_2 \, uz^2 \, + b_3 \, vz^2 \, + b_4z^3 = 0.
\end{split}
\end{align}
where $u,v,z$ are the new homogeneous variables. The \emph{affine plane} $\C^2$ is contained in the space~$\p^2$: it consists of all points with $z \neq 0$. That is,~$\p^2$ is a disjoint union of $\C^2$ and the projective line $\p^1$ corresponding to $z=0$ \cite[Sections 2.2 and 2.3]{michalek2021invitation}. The advantage of passing to the projective space is that we may now apply B\'ezout's theorem. This is a classical result which tells us that, if \eqref{eq:N=1hom} has finitely many solutions in $\p^2$,
then their number (counted with multiplicity) is the product of the degrees of the polynomials, i.e.~nine. It is important to note that, to know the number of solutions to the affine system \eqref{eq:N=1}, we have to subtract from $9$ the number of solutions of \eqref{eq:N=1hom} on $\p^1 = \{z = 0\}$, counted with \emph{multiplicities}.

There are several ways to define the multiplicity of an isolated solution.
One definition uses local rings; see, e.g., \cite[Chapter 4, Definition 2.1]{cox2006using}. Intuitively speaking, the multiplicity of a solution~$(u^*:v^*:z^*) \in \p^2$ of $f_z = g_z = 0$ is the number of isolated solutions of a slightly perturbed system $f_z = \varepsilon_1 \cdot z^3, g_z = \varepsilon_2 \cdot z^3$ in a small neighbourhood of $(u^*:v^*:z^*)$. Characterizing when a solution has multiplicity $>1$ can be done via the \emph{Jacobian determinant}. If $u^* \neq 0$, we may assume $u^* = 1$ and $(1:v^*:z^*)$ has multiplicity strictly greater than one if and only if the Jacobian matrix
\[
\begin{bmatrix}
\ \frac{\partial f_z(1,v,z)}{\partial v}\ &
\ \frac{\partial f_z(1,v,z)}{\partial z}\ \\[0.5em]
\ \frac{\partial g_z(1,v,z)}{\partial v}\
&
\ \frac{\partial g_z(1,v,z)}{\partial z}\
\end{bmatrix}\hspace{-0.25em}\Biggm|_{(v,z) = (v^*,z^*)}
\]
has rank $< 2$, or equivalently, if its determinant vanishes.

The following lemma, known in various levels of generality, will be very important in our analysis. It says that the number of isolated solutions may only go down under specialization of parameters.
\begin{lemma} \label{lem:LSC}
For $i=1,\dots, n$ let $F_i\in\C[x_1,\dots,x_n,y_1,\dots,y_m]$ (the $x_i$ play the role of variables while the $y_i$ are parameters).
The function $\Phi: \C^m\rightarrow\Z$ that to $p\in \C^m$ associates the number of isolated solutions of the system $F_1(x;p)=\dots=F_n(x;p)=0$ in $\C^n$ is lower semicontinuous, meaning that the set $\{p \in \C^m \mid  \Phi(b) \leq k \}$ is Zariski closed\footnote{A subset of $\mathbb C^m$ is called Zariski closed, if it is the vanishing set of a system of polynomials.} in $\C^m$ for all $k$.
\end{lemma}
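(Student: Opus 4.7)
The plan is to prove that $\{p\in\C^m : \Phi(p)\le k\}$ is Zariski closed by establishing two things: \textbf{constructibility} of its complement, via classical algebro-geometric arguments on the incidence variety, and \textbf{Euclidean openness} of the same complement, via a local topological-degree argument. These combine to give Zariski openness of the complement through the standard fact that a constructible subset of $\C^m$ which is closed in the classical topology is automatically Zariski closed.

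For the set-up, let $V = V(F_1,\dots,F_n) \subset \C^n\times\C^m$ be the incidence variety, $\pi\colon V\to\C^m$ the projection to parameters, and $V_p = \pi^{-1}(p)$ the fiber over $p$. For the constructibility step, I would first invoke upper semicontinuity of fiber dimension to conclude that the subset $Z := \{(x,p)\in V : \dim_x V_p \ge 1\}$ is Zariski closed, so that $W := V\setminus Z$ is a Zariski open subvariety whose points are exactly the pairs $(x,p)$ for which $x$ is an isolated point of $V_p$. The $(k+1)$-fold fiber product of $W$ over $\C^m$, with all pairwise diagonals removed, is locally closed; its projection to $\C^m$ is constructible by Chevalley's theorem and equals $\{\Phi\ge k+1\}$.

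For Euclidean openness, I would fix $p_0$ with $\Phi(p_0)\ge k+1$, choose $k+1$ distinct isolated points $x_1,\dots,x_{k+1}\in V_{p_0}$, and pick pairwise disjoint closed Euclidean balls $\overline{B_j}\ni x_j$ such that $V_{p_0}\cap \overline{B_j}=\{x_j\}$. In particular $V_{p_0}\cap \partial B_j=\varnothing$, and by compactness of $\partial B_j$ together with continuity of $F$ in $p$, the same remains true for all $p$ in some Euclidean neighborhood $U$ of $p_0$. For such $p$, the Brouwer degree of $F(\cdot,p)/\|F(\cdot,p)\|$ on $\partial B_j$ agrees with its value at $p_0$, namely the positive local intersection multiplicity of $F(\cdot,p_0)$ at $x_j$, so $V_p\cap B_j$ is nonempty. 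Moreover, a positive-dimensional irreducible analytic component of $V_p\cap B_j$ would have to accumulate to $\partial B_j$, since no positive-dimensional analytic subset of $\C^n$ is compact; this contradicts $V_p\cap \partial B_j=\varnothing$. Hence $V_p\cap B_j$ is a discrete compact set, i.e.\ finite, and each of its points is automatically isolated in $V_p$; collecting one such point per ball yields $k+1$ distinct isolated solutions, so $\Phi(p)\ge k+1$ throughout $U$.

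I expect the main technical obstacle to be the last part of the Euclidean argument --- excluding positive-dimensional components of $V_p\cap B_j$ for $p$ close to $p_0$ --- since merely finding a nearby zero of $F(\cdot,p)$ in $B_j$ is an immediate degree computation but does not by itself guarantee isolatedness. The key ingredients there are the boundary-disjointness coming from continuity of $F$ together with the non-compactness of positive-dimensional analytic subvarieties of $\C^n$.
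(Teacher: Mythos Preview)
Your argument is correct. The constructibility step via Chevalley applied to the off-diagonal part of the $(k+1)$-fold fiber product of $W$ is clean, and the Euclidean-openness step is sound: the positivity of the local degree for holomorphic maps guarantees a zero of $F(\cdot,p)$ in each $B_j$, and the clopen-in-an-irreducible-component trick (using that positive-dimensional affine varieties over $\C$ are unbounded, hence cannot be trapped in a ball disjoint from its boundary) correctly forces every such zero to be isolated in $V_p$. The passage from ``constructible and Euclidean open'' to ``Zariski open'' is the standard fact that Euclidean and Zariski closures of constructible subsets of $\C^m$ coincide.

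By contrast, the paper does not give an argument at all: it simply invokes \cite[Theorem~7.1.4]{sommese2005numerical}, the coefficient-parameter theorem from numerical algebraic geometry. That result is proved in Sommese--Wampler by essentially the same circle of ideas you use in your Euclidean step (continuity of roots, homotopy invariance of multiplicity), packaged into a statement about parameter homotopies; the algebraic constructibility half of your argument is an addition that makes the \emph{Zariski} (rather than merely Euclidean) closedness explicit. So your route is more self-contained and more informative about why the conclusion holds in the Zariski topology, at the cost of being longer than a one-line citation.
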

\begin{proof}
This is a direct consequence of \cite[Theorem 7.1.4]{sommese2005numerical}.
\end{proof}

\begin{remark}
We note that the assumption that we have as many polynomials as variables $x_i$ is essential. Otherwise, it is possible that the number of isolated solutions goes up in the limit. The following example is interesting: let $m=n=2$ and consider
\begin{align*}
&f_1=x_1(y_2-x_1y_1), \; f_2=x_1x_2, \; f_3=x_1(x_1^2-y_1-1),\\
&f_4=(x_2-1)(y_2-x_1y_1), \; f_5=(x_2-1)x_2, \;  f_6=(x_2-1)(x_1^2-y_1-1).
\end{align*}
For any $y_1,y_2$ this system has a solution, namely $(x_1, x_2) = (0,1)$ and for general $y_1,y_2$ this is the only solution. However, when $y_2^2-y_1^3-y_1^2=0$ there is always one additional solution (for which $x_2=0$) and when $y_1=y_2=0$ there are three isolated solutions:~$(x_1,x_2) = (1,0), (-1,0), (0,1)$. For the experts, we point out that the incidence variety defined by $f_1, \ldots, f_5$ in $\C^2 \times \C^2$ is smooth, and thus in particular Cohen-Macaulay. All fibers are finite and yet the conclusion of Lemma \ref{lem:LSC} does not hold. The reason is the failure of miracle flatness: one of the components of the incidence variety projects to a singular curve.
\end{remark}

Let us now prove \cref{thm:mainN=1}.
\begin{proof}[Proof of \cref{thm:mainN=1}]
In \cite{kovsata2022harmonicbalance}, by explicit computation, parameters for which one has five real isolated solutions were found. Hence, five is indeed a lower bound.
For the sake of completness we provide also the following direct argument.
In \eqref{eq:N=1} we set $a_1=b_1=a_3=b_2=1$ and the other parameters to zero. This yields
$ur^2 + v = vr^2 + u = 0,$
with $r^2 = (u^2+v^2)$. If $u=0$, we also have $v=0$. Conversely, if $v=0$, we also have $u=0$. If $u,v\neq 0$, we plug $v=-ur^2$ into the second equation to get
$-u(u^2+v^2)^2 + u = 0$. Since $u\neq 0$, this shows $(u^2+v^2)^2 = 1$, which has 4 solutions. In total we have $1+4=5$ isolated solutions in $\mathbb C^2$.

Now, if we put $z=0$ in  \eqref{eq:N=1hom}, we have two solutions of $f_z=g_z=0$, namely $u=iv$ and $u=-iv$. We will argue below that each of these solutions has multiplicity at least two, which leaves us with at most $5 = 9 - 2 \cdot 2$ solutions of \eqref{eq:N=1}.  As the number of isolated solutions for general parameters upper-bounds the number of isolated solutions for any set of parameters (Lemma \ref{lem:LSC}), this allows us to conclude that \eqref{eq:N=1} indeed has $5$ solutions for general parameters.

Let us now prove that the two projective solutions with $z=0$ have multiplicity at least two.
Recall that for $z=0$ we have the two solutions $u=iv$ and $u=-iv$. Using affine coordinates $u=1$ we obtain two affine polynomials $f_z(1,v,z),  g_z(1,v,z)$ in the ring $\C[v,z]$. The Jacobian matrix at the point $v^*=i, z^*=0$ is:
\[\begin{bmatrix}
2ia_1&0\\
-2b_1&0
\end{bmatrix},\]
which has zero determinant. This implies that the multiplicity of this solution is at least two. For the other solution, the reasoning remains the same, which concludes the proof.
\end{proof}


\begin{remark}The main difficulty with generalizing this analysis to  $N>1$ oscillators is that, after passing to the projective space, we obtain infinitely many solutions. In principle, it is possible to count the contribution of such solutions using the theory of characteristic classes. However, this is often infeasible in practice. {For the reader who is familiar with Bernstein's root count \cite{bernshtein1975number}, we point out that the mixed area of the Newton polygons of $f$ and $g$ is $9 > 5$.}
\end{remark}

\subsection{The discriminant}\label{sec:discriminant}

In the setting of the previous subsection ($N=1$) the discriminant is the
algebraic hypersurface
$$\Delta = \mathrm{zcl}\big(\ \{(a,b)\in\mathbb C^8 \mid \exists (u,v)\in\mathbb C^2: f(u,v)=g(u,v)= (\partial_u f \partial_vg - \partial_v f \partial_ug)(u,v)= 0\}\ \big),$$
where $\mathrm{zcl}(\,\cdot\,)$ denotes \emph{Zariski-closure}\footnote{For a subset $A\subset \mathbb C^n$ let $I(A)$ be the \emph{ideal} of polynomials in $\mathbb C[x_1,\ldots,x_n]$ vanishing on $A$. The Zariski closure of $A$ is $\mathrm{zcl}(A)=\{x\in\mathbb C^n\mid p(x)=0 \text{ for all } p\in I(A)\}$.
That is, $\mathrm{zcl}(A)$ is the smallest (by inclusion) Zariski closed set containing $A$.},
Determining the coefficients $(a,b)\in\mathbb C^8$ for which the overdetermined system $f = g = \partial_u f \, \partial_v g - \partial_v f \, \partial_u g = 0$ has a solution is a \emph{resultant} computation. We warn the reader that it does \emph{not} work to use standard determinantal formulas for the resultant, and then specialize the coefficients. For instance, the \emph{Macaulay resultant} detects solutions in projective space $\p^2$ \cite[Chapter 3]{cox2006using}. Our analysis above shows that there are two singular solutions in $\p^2$ for \emph{any} choice of parameters $(a,b)$, so that specializing the Macaulay resultant to our system gives the zero polynomial. For more on discriminants and resultants, the reader may consult the standard text book \cite{gelfand2008discriminants}.

Specialized software tools can be used for computing the discriminant $\Delta$. For instance, the package \texttt{RootFinding} provided by Maple \cite{maple} has a command \texttt{DiscriminantVariety}, which returns the polynomial defining $\Delta$. Another way to do this is via elimination of variables in Macaulay2~\cite{M2}.
Eliminating $u, v$ from $f = g = \partial_u f \, \partial_v g - \partial_v f \, \partial_u g  = 0$ is the algebraic counterpart of the geometric operation of projecting the \emph{incidence variety}
\[\mathcal I = \big\{ \big( (u,v),(a,b)\big) \in \C^2 \times \C^8 \mid f = g = \partial_u f \, \partial_v g - \partial_v f \, \partial_u g  = 0 \big\} \]
to the parameter space $\C^8$ \cite[Chapter 4]{michalek2021invitation}; i.e., $\Delta = \mathrm{zcl}(\pi(\mathcal I))$, where $\pi:\mathcal I\to \mathbb C^8$ is the projection onto the $(a,b)$ variables. Such projections rely on Gr\"obner basis computations and are computationally demanding. The following efficient code to perform elimination in Macaulay2 was provided to us by Bernd Sturmfels:
\begin{verbatim}
R = QQ[u,v,a_1..a_4,b_1..b_4, MonomialOrder=>Eliminate 2];
z = 1
f = a_1*(u^2 + v^2)*u + a_2*u*z^2 + a_3*v*z^2 + a_4*z^3
g = b_1*(u^2 + v^2)*v + b_2*u*z^2 + b_3*v*z^2 + b_4*z^3
T = diff(u,f)*diff(v,g) - diff(v,f)*diff(u,g);
F = {f, g, T};
IM = ideal F;
time G = selectInSubring(1, gens gb(IM, DegreeLimit=>24));
dis = first first entries G;
degree(dis), # terms dis
\end{verbatim}
These computations lead to the following result.
\begin{theorem}
In the case of a single oscillator ($N=1$) the discriminant $\Delta$ is the zero set of a degree 18 polynomial with with 578 terms.
\end{theorem}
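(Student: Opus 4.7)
The statement is essentially a computational assertion about the output of an elimination computation, so the proof plan is to verify, rigorously and reproducibly, that the polynomial produced by the code above is (a scalar multiple of) the defining equation of $\Delta$. Set $R = \mathbb{C}[u,v,a_1,\dots,a_4,b_1,\dots,b_4]$ and dehomogenize by $z=1$. Let $I = (f,g,T) \subset R$, where $T = \partial_u f\,\partial_v g - \partial_v f\,\partial_u g$, so that the incidence variety is $\mathcal I = V(I) \subset \mathbb{C}^2 \times \mathbb{C}^8$. Since $\Delta = \mathrm{zcl}(\pi(\mathcal I))$ for the projection $\pi$ to the parameter space, elimination theory gives $I(\Delta) = I \cap \mathbb{C}[a_1,\dots,b_4]$. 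A Gr\"obner basis of $I$ with respect to an elimination order eliminating $\{u,v\}$ therefore intersects $\mathbb{C}[a_1,\dots,b_4]$ in a generating set of $I(\Delta)$.

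The plan is: first, argue that $I(\Delta)$ is principal by showing that $\Delta$ is an irreducible hypersurface. Irreducibility will follow by exhibiting $\mathcal I$ as an irreducible variety of the expected dimension $7$ (for instance by observing that the projection of $\mathcal I$ onto $(u,v)$-space is surjective with irreducible linear fibers once we solve the three equations $f=g=T=0$ for three of the eight parameters), and then noting that a dominant morphism from an irreducible variety to $\mathbb{C}^8$ has irreducible Zariski closure of its image. To show the image is a hypersurface (not all of $\mathbb{C}^8$), simply exhibit one parameter vector outside $\Delta$, e.g.~the one used in \Cref{thm:mainN=1}, where the system has five simple roots. Together this shows that $I(\Delta)$ is generated by a single irreducible polynomial $\mathrm{dis}$, unique up to scalar.

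Second, run the elimination. The Macaulay2 code implements exactly this: the monomial order \texttt{Eliminate 2} eliminates $u,v$, and \texttt{selectInSubring(1,\,\ldots)} extracts the generators of $I \cap \mathbb{C}[a_1,\dots,b_4]$. The output polynomial $\mathrm{dis}$ must equal $I(\Delta)$ up to scalar by the previous paragraph. Then \texttt{degree} and \texttt{\#~terms} return $18$ and $578$, respectively.

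The main obstacle is the cost of the Gr\"obner basis computation: elimination over eight parameters with degree-three input can balloon in memory and time, and one cannot \emph{a priori} guarantee that truncating with \texttt{DegreeLimit=>24} captures the full elimination ideal. To justify the truncation I would give an a priori upper bound on the degree of $\mathrm{dis}$ (for instance via a resultant-style bound obtained by specializing a few parameters to reduce to a univariate discriminant whose degree can be read off), show it does not exceed $24$, and confirm the output is irreducible (Macaulay2's \texttt{factor} suffices). Checking irreducibility a posteriori then closes the argument: the computed $\mathrm{dis}$ is an irreducible element of $I \cap \mathbb{C}[a_1,\dots,b_4]$, hence a generator of $I(\Delta)$, and counting monomials verifies the numerical claim.
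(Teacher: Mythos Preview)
The paper's proof of this theorem is entirely computational: it simply reports that running the displayed Macaulay2 code yields a polynomial of degree $18$ with $578$ terms. You go considerably further by trying to certify the output, which is commendable, but that is also where your gaps lie.

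Two concrete issues. First, the assertion ``elimination theory gives $I(\Delta) = I \cap \C[a_1,\dots,b_4]$'' is not correct as written: elimination yields $V(I \cap \C[a,b]) = \Delta$, i.e.\ $I(\Delta) = \sqrt{I \cap \C[a,b]}$, and the elimination ideal need not be radical. Your later a~posteriori argument (check that \texttt{dis} is irreducible, so that $\Delta \subset V(\texttt{dis})$ together with $\dim\Delta = 7$ forces equality) actually sidesteps this, so it is a mis-statement rather than a fatal flaw---but you should say it correctly. Second, and more substantively, the fibers of $\mathcal I \to \C^2_{(u,v)}$ are \emph{not} linear: while $f=0$ is linear in $a$ and $g=0$ is linear in $b$, the Jacobian condition $T = \partial_u f\,\partial_v g - \partial_v f\,\partial_u g$ is \emph{bilinear} in $(a_1,a_2,a_3)$ and $(b_1,b_2,b_3)$. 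Hence the fiber over a fixed $(u,v)$ is a quadric hypersurface in an affine $\C^6$, not a linear space. You can still salvage irreducibility of $\mathcal I$ by checking that this bilinear form has rank at least $2$ for generic $(u,v)$ (the $2\times 2$ minor coming from the $a_2 b_3$ and $a_3 b_2$ coefficients already does this), but the argument as you wrote it does not go through. Finally, your instinct to justify \texttt{DegreeLimit=>24} via an a~priori degree bound is sound and is exactly the kind of rigor the paper omits; alternatively, one can drop the degree limit and let \texttt{eliminate} run to completion, then verify irreducibility of the single output.
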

We note that the discriminant subdivides real parameter space $\R^8$ into chambers in such a way that in each chamber the number of real solutions is constant. This allows us to reconstruct several special chamber decompositions from \cite{heugel2022ising, kovsata2022harmonicbalance}.

The following example provides two-dimensional cuts of the space of parameters showing the resultant hypersurface.
\begin{example}\label{example_disc}
We compare our results with \cite[Figure 6(a)]{kovsata2022harmonicbalance}, where the discriminant is plotted in a 2-dimensional slice of the space of parameters. Explicitly, one substitutes
\[
\begin{matrix}
a_1= \omega^2\eta^2 + 9 & &
a_2= (4 \eta \gamma - 12) \omega^2 + 3 (-2 \lambda + 4) \\
a_3 = 2((\lambda + 2)\eta - 6 \gamma) \omega - 4 \omega^3 \eta & &
a_4 = -12 F \cos(\Theta) - 4 \omega \eta F \sin(\Theta) \\
b_1 =  \omega^2 \eta^2 + 9 & &
b_2 = 2((\lambda - 2) \eta + 6 \gamma) \omega + 4 \omega^3 \eta \\
b_3= (4 \eta \gamma - 12) \omega^2 + 3 (2 \lambda + 4) & &
b_4 = -12 F \sin(\Theta) + 4 \omega \eta F \cos(\Theta)
\end{matrix} \]
together with $\eta = 1/2, \gamma = 1/100$ and $F = 0$.
We obtain that the discriminant is given by
$$p(\omega,\lambda)^3\cdot q(\omega,\lambda)^2\cdot \lambda^2\cdot (\omega^2 + 36)^{10}=0,$$
where
\begin{align*}
p(\omega,\lambda) &= 10000\omega^4 - 19999\omega^2 - 2500\lambda^2 + 10000\\
q(\omega,\lambda) &= 2500\omega^6 - 4700\omega^4 - 625\omega^2\lambda^2 + 2209\omega^2 - 22500\lambda^2
\end{align*}
The zero set of $p\cdot q$ in the box $[0.98,1.04]\times [0,0.04]$ can be seen in \Cref{fig_disc}. We see four regions. On each of the four regions the number of real solutions in locally constant. Indeed, in the red region all solutions are real and in the orange region we have three real solutions. In both the grey and the purple region there is only one real solution. These two regions are separated because on the curve that separates them, two complex solutions collapse. This is not visible in~\cite[Figure 6(a)]{kovsata2022harmonicbalance}, as for a random choice of parameters the probability of choosing parameters from a curve is zero.

\begin{figure}[ht]
\begin{center}
\includegraphics[width = 0.6\textwidth]{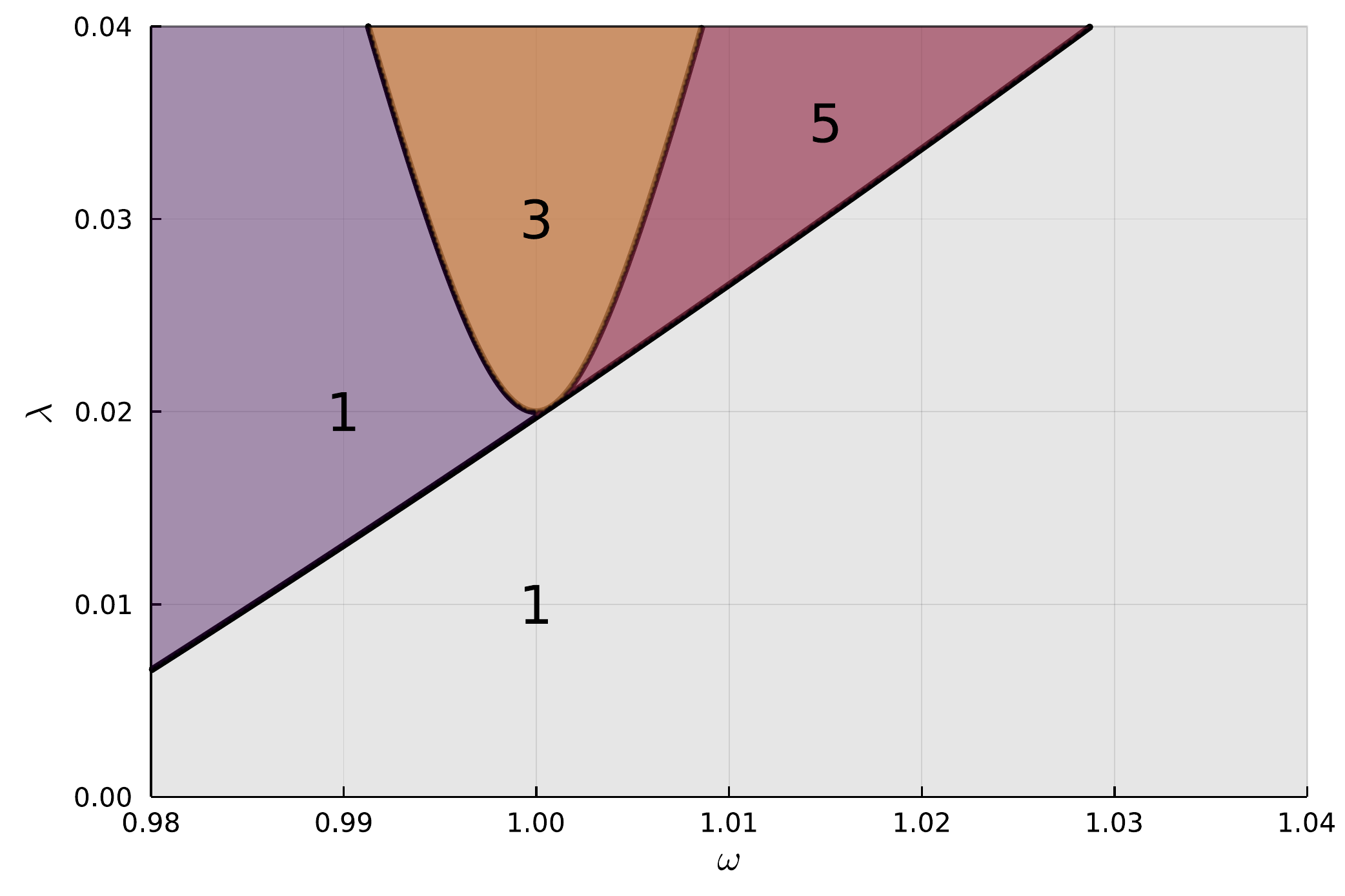}
\end{center}
\caption{\label{fig_disc} The picture shows the discriminant from \cref{example_disc} in the $(\omega,\lambda)$-plane. The discriminant is the union of the black curves. The complement of the discriminant is separated into four regions. On each region, the number of real solutions (1, 3 or 5) is locally constant.}
\end{figure}

\begin{figure}[ht]
\begin{center}
\includegraphics[width = 0.7\textwidth]{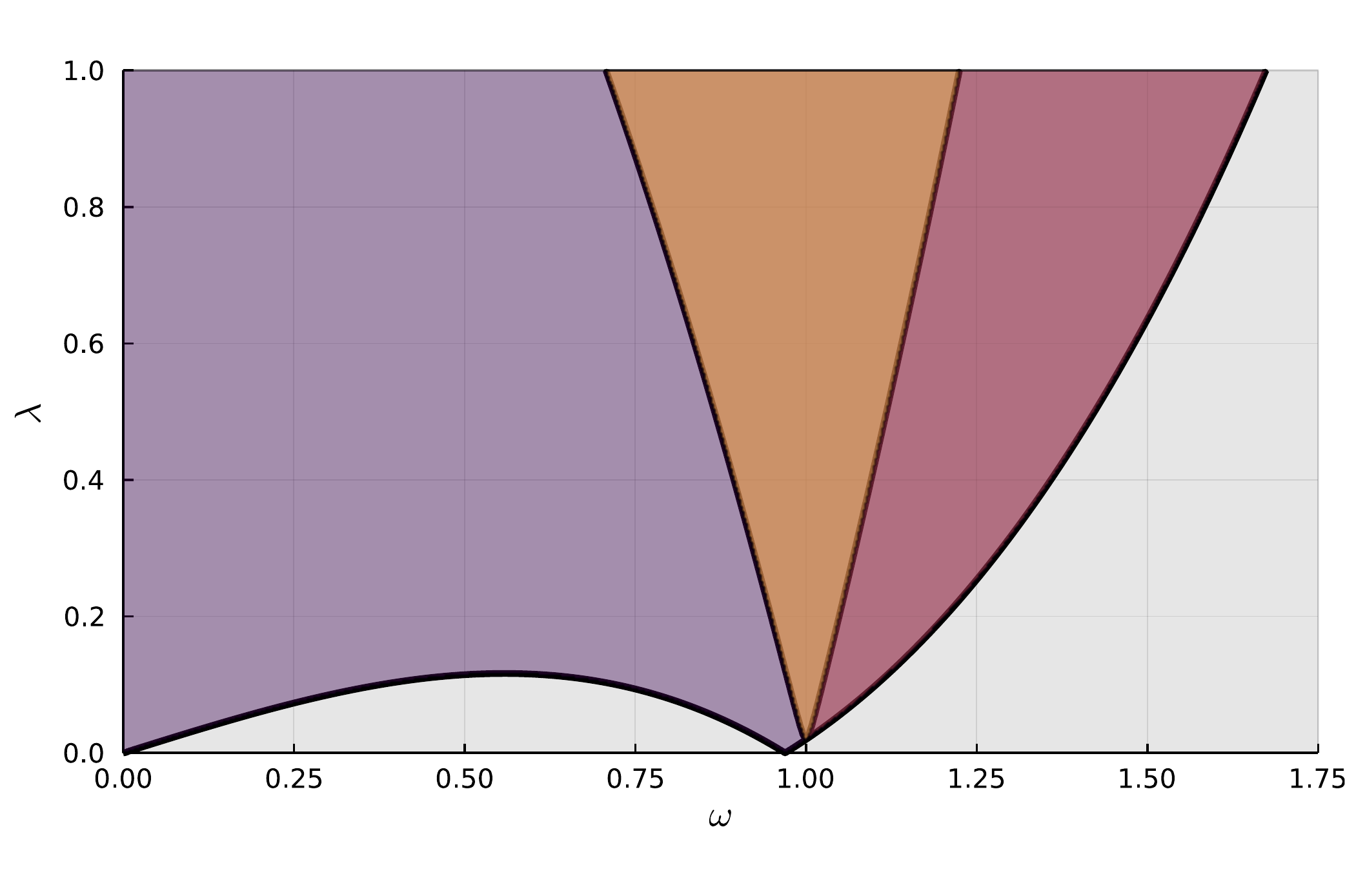}
\end{center}
\caption{\label{fig_disc2} The picture shows the discriminant from \cref{example_disc} in the $(\omega,\lambda)$-plane; the ranges of the axes are larger compared to \Cref{fig_disc}, but the color code is the same. While the curve separating the grey region from the rest appears linear in \Cref{fig_disc}, here it becomes apparent that it is not a line but a curve of higher degree.}
\end{figure}

\end{example}

\section{Preliminaries on Khovanskii bases} \label{sec:4}
In \Cref{thm:mainN=1} we proved \cref{thm:main} in the case $N=1$. The proof for arbitrary $N$ makes use of so called \emph{Khovanskii bases}. This is a concept that we now briefly explain (more details can be found in \cite{kaveh2012newton}). The proof of \cref{thm:main} comes in \cref{sec:5}.
\cref{thm:main} makes a statement on the number of solutions in $\C^n$ of a system of $n$ polynomial equations $F_1(x)=\dots=F_n(x)=0$ in $n$ variables~$x = (x_1, \ldots, x_n)$. Specifically, in \cref{thm:main} we have $n=2N$ and the $F_i$ have a certain structure.
The discussion in this section applies at the following general level. We consider $m$ (distinct) polynomials $\theta_1, \ldots, \theta_m$
such that we have $F_i = \sum_{j=1}^m c_{ij} \, \theta_j(x)$ with coefficients $c_{ij} \in \C$. There are of course many choices for such functions $h_j$. A simple criterion to see which choices fit our purpose is the following. We replace the coefficients $c_{ij}$ by arbitrary complex numbers $\tilde{c}_{ij}$ to obtain the system
\[ \sum_{j=1}^m \tilde{c}_{ij} \, \theta_j(x) = 0, \qquad i = 1, \ldots, n.\]
For almost all choices of $\tilde{c}_{ij}$, the number of solutions is a fixed positive integer. A good choice for the functions $\theta_j$ is such that $F_1(x) = \cdots = F_n(x) = 0$ has this expected number of solutions. In what follows, we explain how to compute this number.

One of the possible approaches is to change our system to a linear one, at the cost of changing the space of solutions from $\mathbb C^n$ to a more complicated one. We  consider the map
$$\phi:\C^n\dashrightarrow \p^{m-1}, \qquad x \mapsto \big[\ \theta_1(x)\  :\ \cdots\ : \ \theta_m(x)\ \big].$$
The symbol ``$\dashrightarrow$'' indicates that this map is not defined everywhere -- it is defined outside the joint zero set of the $\theta_i$. Now, for every $i$
the hypersurface $\{F_i=0\} \subset \C^n$ is mapped to a hyperplane $L_i \subset \p^{m-1}$, with linear equation $\sum_{j=1}^m c_{ij} \, y_j = 0$. Consequently, the problem of solving the polynomial system $F_1=\cdots=F_n=0$  breaks up into two subproblems:
\begin{enumerate}
\item determine the intersection of the image of $\phi$ with $r$ hyperplanes,
\item for each point in the intersection, determine the fiber of $\phi$.
\end{enumerate}
We assume from now on that the map $\phi$ is injective and that its fibers are easy to compute, which is the case for our equations~\eqref{eq:coupledduffeqns}. In this case, we may focus on step (1).
Since the $F_i$ are general linear combinations of our $m$ polynomials $\theta_j$, the number of points in the intersection from step (1) is the \emph{degree} of the closure of the image of $\phi$. We denote this by
$$X = \mathrm{zcl}({{\rm im} \, \phi}),$$
where, as before, $ \mathrm{zcl}(\,\cdot\,)$ denotes Zariski-closure\footnote{Here, we have Zariski closure in projective space $\mathbb P^{m-1}$. The definition is similar as for $\mathbb C^n$: when $I(A)$ denotes the \emph{homogeneous ideal} of polynomials vanishing on $A$, then $\mathrm{zcl}(A) = \{x\in\mathbb P^{m-1}\mid p(x)=0\text{ for all }x\in I(A)\}$.}. The degree of an algebraic variety is a central algebraic invariant. A first definition was already alluded to: the degree of a variety $X \subset \p^{m-1}$ of dimension $k$ is the number of points we obtain by intersecting $X$ with $k$ general hyperplanes.
\begin{example} \label{ex:running1}
For the system of equations \eqref{eq:N=1} in the case of a single oscillator $\phi:\C^2 \rightarrow \p^4$ is given by
\begin{equation}  \label{eq:ourphi}
\phi(u,v) = \big[\ u(u^2+v^2)\ :\ v(u^2+v^2)\ :\ u\ :\ v\ :\ 1\ \big];
\end{equation}
i.e., the polynomials in  \eqref{eq:N=1} are linear combinations of four polynomials $u(u^2+v^2), v(u^2+v^2)$, $u, v$ and a constant term.
Using homogeneous coordinates $y_1, \ldots, y_5$ on $\p^4$, we find that the closure of the image is
\[ X = \{y\in\p^4\mid \, y_2y_3-y_1y_4 =  y_3^2y_4+y_4^3-y_2y_5^2 = y_3^3+y_3y_4^2 - y_1y_5^2 = 0\}. \]
This is a surface of degree 5, as confirmed by the command \texttt{degree} in \texttt{Macaulay2}. This number can also be obtained by adding two general linear equations, which defines 5 isolated points in~$\p^4$. Note that this gives the upper bound 5 on the number of isolated solutions to \eqref{eq:N=1}.
\end{example}

\subsection{Hilbert functions}
A second definition of the degree of $X$ is more technical, but more  useful for our purpose. It uses the \emph{Hilbert polynomial} of $X$ \cite[Chapters 1 and 2]{michalek2021invitation}. In order to introduce this in our setting, let us define the graded algebra $S_\phi$ associated to the image of~$\phi$. This is the subalgebra of the polynomial ring $\C[x_1,\dots,x_n,s]$ generated by $s,s\theta_1,\dots,s\theta_m$, written $$S_\phi = \C[s, s\theta_1, \ldots, s\theta_m] \subset \C[s,x_1, \ldots, x_n].$$
The ring $\C[s,x_1, \ldots, x_n]$ is graded by setting
\begin{equation} \label{eq:grading}
\deg(s) = 1 \quad \text{and} \quad \deg(x_1) = \cdots = \deg(x_n) = 0.
\end{equation}
That is, the degree of a monomial in $\C[s, x_1, \ldots, x_n]$ is determined by its exponent in $s$. Let~$(S_\phi)_\ell$ be the vector space of degree $\ell$ elements in $S_\phi$.
The \emph{Hilbert function} is \[H_{S_\phi}:\Z_{\geq 0}\to \Z_{\geq 0}, \quad \text{given by } \quad H_{S_\phi}(\ell)=\dim_\C (S_\phi)_\ell.\] It turns out that for sufficiently large $\ell$, the function $H_{S_\phi}$ coincides with a polynomial $P_\phi$, known as the \emph{Hilbert polynomial}.
If the image of $\phi$ has dimension $k$, then the Hilbert polynomial is of degree $k$. The \emph{degree} of $X$ can be defined as $k!$ times the leading coefficient of $P_\phi$. The equivalence of this definition with the previous, geometric definition can be found in \cite[Theorem 16.9]{cutkosky2018introduction}, for instance.
This reduces the problem of computing the degree of a variety to understanding the dimensions of the vector spaces $(S_\phi)_\ell$ for large $\ell$.

\begin{example}\label{ex:running2}
We continue Example \ref{ex:running1}. The algebra $S_\phi$ is
\begin{equation} \label{eq:Sphi}
S_\phi \, = \, \C\big[\ su(u^2+v^2),\ sv(u^2+v^2),\ su,\ sv,\ s\ \big] \, \subset \,  \C[s,u,v].
\end{equation}
The degree 1 elements are linear combinations of its five generators. Hence $H_{S_\phi}(1) = 5$. Computing pairwise products of these five elements we find a basis for $(S_\phi)_2$:
\small
\begin{align*}
(S_\phi)_2 = {\rm Span}_\C \, \{\: &s^{2},\:s^{2}u,\:s^{2}v,\:s^{2}u^{3}+s^{2}u\,v^{2},\:s^{2}u^{2}v+s^{2}v^{3},\:s^{2}u^{2},\:s^{2}u\,v,\:s^{2}u^{4}+s^{2}u^{2}v^{2}, \\
&s^{2}u^{3}v+s^{2}u\,v^{3},\:s^{2}v^{2},\:s^{2}u^{2}v^{2}+s^{2}v^{4},\:s^{2}u^{6}+2\,s^{2}u^{4}v^{2}+s^{2}u^{2}v^{4},\\
&s^{2}u^{5}v+2\,s^{2}u^{3}v^{3}+s^{2}u\,v^{5},\:s^{2}u^{4}v^{2}+2\,s^{2}u^{2}v^{4}+s^{2}v^{6}\: \}.
\end{align*}
\normalsize
This shows that $H_{S_\phi}(2) = 14$. The command \texttt{hilbertFunction} in \texttt{Macaulay2} gives
\begin{equation} \label{eq:HFtable}
\begin{matrix}
\ell \, : & 0 & 1 & 2 & 3 & 4 & 5 & 6 & 7 & 8 \\
H_{S_\phi}(\ell)  \,: & 1 & 5 & 14 & 28 & 47 & 71 & 100 & 134 & 173
\end{matrix}.
\end{equation}
The Hilbert polynomial equals $P_\phi(\ell) = (5/2) \ell^2 + (3/2)\ell + 1$, which coincides with $H_{S_\phi}$ for all~$\ell \in \N$. Notice $\deg P_\phi = 2$, since $X$ is a surface. The leading coefficient confirms $\deg X = 5$.
\end{example}

The Hilbert function of $S_\phi$ is well understood when all $\theta_j$'s are monomials, i.e.~$g_j = x^{\alpha_j}$. In this case, $X = \mathrm{zcl}({\rm im} \, \phi)$ is called a \emph{toric variety} and our algebra $S_\phi$ is spanned by monomials as a $\C$-vector space, so that we are reduced to counting monomials in $S_\phi$ of a given degree. A famous result by Kushnirenko states that the leading coefficient of the Hilbert polynomial $P_\phi$ is the Euclidean volume of the polytope obtained by taking the convex hull of the exponent vectors~$\alpha_j, j = 1, \ldots, m$ in $\R^n$. This number, multiplied with $n!$, is the degree of the image variety $X$. For a proof of Kushnirenko's theorem and examples, see e.g.~\cite[Section 3.4]{telen2022introduction}.

Unfortunately, in our setting $S_\phi$ is \emph{not} generated by monomials. \cref{ex:running2} shows that in the case $N=1$ the algebra $S_\phi$ has generators $u(u^2+v^2)$ and $v(u^2+v^2)$. Similarly, for $N\geq 2$ we have generators $u_i(u_i^2+v_i^2)$ and $v_i(u_i^2+v_i^2)$ for every $i=1,\ldots,N$. Nevertheless, with some more effort one can still reduce to the monomial case. The idea is to consider the \emph{initial algebra} of~$S_\phi$ with respect to some \emph{term order} on $\C[s,x_1,\dots,x_n]$.

More precisely, fix a degree compatible term order $\preccurlyeq$ on $\C[s,x_1,\dots,x_n]$. This is a term order such that $s^{k_1}x^{\alpha_1}\preccurlyeq s^{k_2}x^{\alpha_2}$ whenever we have $\deg(s^{k_1}x^{\alpha_1})\leq \deg(s^{k_2}x^{\alpha_2})$. For a polynomial $p=\sum_{(k,\alpha) \in \mathcal{A}} c_{(k,\alpha)} s^k x^\alpha\in \C[s,x_1,\dots,x_n]$ with $c_{(k,\alpha)} \neq 0$,
define its \emph{leading monomial} to be
$$\mathrm{init}_\preccurlyeq(p) =  s^\ell x^\beta,\quad \text{ where } s^{\ell} x^\beta  \preccurlyeq  s^{k}x^\alpha\text{ for all $(k,\alpha) \in {\mathcal{A}}$}.$$
Then, the \emph{initial algebra} is spanned by the initial terms of all elements of $S_\phi$:
$$(S_{\phi})_\mathrm{in} = \mathrm{Span}_\C\big\{\mathrm{init}(p) \mid p \in S_\phi\big\}.$$
This is a subalgebra of $\C[s,x_1, \ldots, x_n]$.

\begin{proposition}
The algebras $S_\phi$ and $(S_{\phi})_\mathrm{in}$ have the same Hilbert function.
\end{proposition}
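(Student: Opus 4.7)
The plan is to prove, for each fixed $\ell \geq 0$, that $\dim_\C (S_\phi)_\ell = \dim_\C ((S_\phi)_{\mathrm{in}})_\ell$ by exhibiting a basis of $(S_\phi)_\ell$ whose leading monomials form a basis of $((S_\phi)_{\mathrm{in}})_\ell$. The subtle point here is that the map $p \mapsto \mathrm{init}_\preccurlyeq(p)$ is not $\C$-linear, so one cannot simply transport a basis through it; the workaround is to arrange for the basis to have pairwise distinct leading terms, at which point the leading-term map becomes ``piecewise linear enough'' to be used.

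First I would record two preliminary observations. Because $\preccurlyeq$ is degree-compatible, the initial term of a homogeneous element of $S_\phi$ of degree $\ell$ (in the grading \eqref{eq:grading}) is itself homogeneous of degree $\ell$. Consequently $(S_\phi)_{\mathrm{in}}$ inherits the grading, and one has the explicit description $((S_\phi)_{\mathrm{in}})_\ell = \mathrm{Span}_\C\{\mathrm{init}_\preccurlyeq(p) : p \in (S_\phi)_\ell\}$. Moreover $(S_\phi)_\ell$ is finite-dimensional, since $S_\phi$ is finitely generated by the elements $s, s\theta_1, \ldots, s\theta_m$ of degree $1$, so each graded piece is spanned by products of $\ell$ generators.

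Next, starting from an arbitrary $\C$-basis $p_1, \ldots, p_d$ of $(S_\phi)_\ell$, I would apply a Gaussian elimination with respect to $\preccurlyeq$: whenever two basis elements share the same leading monomial, subtract a scalar multiple of one from the other so as to strictly decrease the leading monomial of the latter. This procedure terminates because $\preccurlyeq$ is a well-order on monomials (a standing assumption for term orders). The outcome is a new basis $p'_1, \ldots, p'_d$ of $(S_\phi)_\ell$ whose leading monomials $m_1, \ldots, m_d$ are pairwise distinct elements of $((S_\phi)_{\mathrm{in}})_\ell$.

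Finally I would verify that $m_1, \ldots, m_d$ form a basis of $((S_\phi)_{\mathrm{in}})_\ell$. They are linearly independent as distinct monomials. For the spanning property, given any $q \in (S_\phi)_\ell$, write $q = \sum_i a_i p'_i$ and let $i^\star$ maximize $m_i$ with respect to $\preccurlyeq$ among those $i$ with $a_i \neq 0$; since the $m_i$ are pairwise distinct, no cancellation at $m_{i^\star}$ can occur, and hence $\mathrm{init}_\preccurlyeq(q) = a_{i^\star} m_{i^\star} \in \mathrm{Span}_\C\{m_1, \ldots, m_d\}$. The main obstacle, as already flagged, is the non-linearity of $\mathrm{init}_\preccurlyeq$; it is overcome precisely by the Gaussian elimination above, which in turn relies on the combination of degree-compatibility, the well-order property of $\preccurlyeq$, and the finite-dimensionality of $(S_\phi)_\ell$.
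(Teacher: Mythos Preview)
Your proof is correct and follows essentially the same standard initial-term argument as the paper. The only cosmetic difference is the direction: you start from a basis of $(S_\phi)_\ell$ and Gaussian-eliminate until the leading monomials are distinct, whereas the paper starts from the monomial basis $\mathcal{M}$ of $((S_\phi)_{\mathrm{in}})_\ell$ and lifts each monomial back to an element of $(S_\phi)_\ell$; both constructions produce the same bijection between bases.
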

\begin{proof}
Fix $\ell$ and let $\mathcal{M}$ be a basis of leading monomials for $((S_{\phi})_\mathrm{in})_\ell$. The value of the Hilbert function of $(S_\phi)_\mathrm{in}$ at $\ell$ is
$$H_{(S_\phi)_\mathrm{in}}(\ell)=|\mathcal{M}|.$$
For each $s^kx^\alpha\in \mathcal{M}$ let $h_{k,\alpha}\in (S_\phi)_\ell$ be a polynomial with leading monomial $s^kx^\alpha$. By taking linear combinations of the $h_{k,\alpha}$ we may assume that $s^kx^\alpha$ is the only monomial from $\mathcal{M}$ in the support of $h_{k,\alpha}$.
We claim that ${\mathcal B} = \{h_{k,\alpha}\mid s^kx^\alpha \in \mathcal{M}\}$ is a basis of $(S_\phi)_\ell$, which will finish the proof. The $h_{k,\alpha}$ are linearly independent, because $\mathcal M$ is a~basis. Given any $p\in (S_\phi)_\ell$ we may subtract a linear combination $\sum_{\mu \in \mathcal B} c_\mu \mu$, so that the result involves none of the monomials in the basis~$\mathcal{M}$. But then the leading monomial of the difference $p - \sum_{\mu\in \mathcal M}c_\mu \mu\in(S_\phi)_\ell$
is not contained in $\mathcal{M}$, hence~$p - \sum_{\mu\in \mathcal M}c_\mu \mu=0$.
\end{proof}

\begin{example} \label{ex:firstkhov}
Consider the term order on $\C[s,u,v]$ for which $u^{\alpha_1}v^{\alpha_2}s^{\alpha_3} \preccurlyeq u^{\beta_1}v^{\beta_2}s^{\beta_3}$ if
\[ \alpha_3-\beta_3 < 0 \quad \text{or} \quad
\alpha_3-\beta_3 = 0,\ \alpha_2-\beta_2 > 0
\quad \text{or} \quad
\alpha_3-\beta_3 =  \alpha_2-\beta_2 =  0,\ \alpha_1-\beta_1 > 0.
\]
In other words, $u^{\alpha_1}v^{\alpha_2}s^{\alpha_3} \preccurlyeq u^{\beta_1}v^{\beta_2}s^{\beta_3}$, if the first nonzero entry of $M \cdot (\alpha - \beta)$ is negative, where $\alpha = (\alpha_1, \alpha_2, \alpha_3)^\top$, $\beta = (\beta_1, \beta_2, \beta_3)^\top$ and
\[ M = \begin{bmatrix}
0 & 0 & 1 \\ 0 & -1 & 0 \\ -1 & 0 & 0
\end{bmatrix}. \]
The initial terms of the generators $s\theta_j$ of $S_\phi$ from \eqref{eq:Sphi} are $suv^2, sv^3,su,sv$ and $s$. These form a basis of the degree 1 part of the initial algebra $(S_{\phi})_\mathrm{in}$. Higher degree monomials in $(S_{\phi})_\mathrm{in}$ are obtained as the leading monomial of a linear combination of products of the $\theta_j$. This particular set of polynomials $\theta_j$ has the special property that $(S_{\phi})_\mathrm{in}$ is generated in degree 1:
\begin{equation} \label{eq:khovprop}
 \C[\text{ leading monomials of $s\theta_1, \ldots, s\theta_5$ } ] = (S_{\phi})_\mathrm{in}.
\end{equation}
We will see below that this makes $s\theta_1, \ldots, s\theta_5$ a \emph{Khovanskii basis} of $S_\phi$. Figure \ref{fig:semigroup} shows the graded pieces of $(S_{\phi})_\mathrm{in}$ of degree $1, 2, 3$ and $4$. Here a lattice point $(\alpha_1, \alpha_2, \alpha_3)$ is identified with the monomial $u^{\alpha_1}v^{\alpha_2}s^{\alpha_3}$. The number of marked points at level $\ell$ is seen in~\eqref{eq:HFtable} as $H_{S_\phi}(\ell)$.
\begin{figure}
\centering
\begin{tikzpicture}[scale = 0.85]

\begin{axis}[%
width=4in,
height=3in,
at={(0.926in,0.661in)},
scale only axis,
xmin=-0.5,
xmax=5,
tick align=outside,
ymin=-0.5,
ymax=13,
zmin=0,
zmax=5,
ztick = {10},
view={75}{23.6},
hide axis
]

  \draw[->] (axis cs:0,0,0) -- (axis cs:0,12,0) node[right] {$v$};
    \draw[->] (axis cs:0,0,0) -- (axis cs:0,0,5) node[right] {$s$};
      \draw[->] (axis cs:0,0,0) -- (axis cs:4,0,0) node[right] {$u$};

\addplot3 [color=blue,solid,thick, dashed, fill opacity = 0.1, fill = blue,forget plot]
  table[row sep=crcr]{%
0 0 1 \\
0 1 1 \\
0 3 1 \\
1 2 1 \\
1 0 1 \\
0 0 1 \\
};

\addplot3 [color=blue,draw=none, mark=*, mark options={solid, blue}]
 table[row sep=crcr] {%
1 2 1 \\
0 3 1 \\
1 0 1 \\
0 1 1 \\
0 0 1 \\
};

\addplot3 [color=blue!60!white,solid,thick, dashed, fill opacity = 0.1, fill = blue!60!white,forget plot]
  table[row sep=crcr]{%
0 0 2 \\
0 2 2 \\
0 6 2 \\
2 4 2 \\
2 0 2 \\
0 0 2 \\
};

\addplot3 [color=blue!60!white,draw=none, mark=*, mark options={solid, blue!60!white}]
 table[row sep=crcr] {%
2 4 2 \\
1 5 2 \\
0 6 2 \\
2 2 2 \\
1 3 2 \\
0 4 2 \\
1 2 2 \\
0 3 2 \\
2 0 2 \\
1 1 2 \\
0 2 2 \\
1 0 2 \\
0 1 2 \\
0 0 2 \\
};

\addplot3 [color=blue!20!white,solid,thick, dashed, fill opacity = 0.1, fill = blue!20!white,forget plot]
  table[row sep=crcr]{%
0 0 3 \\
0 3 3 \\
0 9 3 \\
3 6 3 \\
3 0 3 \\
0 0 3 \\
};

\addplot3 [color=blue!20!white,draw=none, mark=*, mark options={solid, blue!20!white}]
 table[row sep=crcr] {%
3 6 3 \\
2 7 3 \\
1 8 3 \\
0 9 3 \\
3 4 3 \\
2 5 3 \\
1 6 3 \\
0 7 3 \\
2 4 3 \\
1 5 3 \\
0 6 3 \\
3 2 3 \\
2 3 3 \\
1 4 3 \\
0 5 3 \\
2 2 3 \\
1 3 3 \\
0 4 3 \\
3 0 3 \\
2 1 3 \\
1 2 3 \\
0 3 3 \\
2 0 3 \\
1 1 3 \\
0 2 3 \\
1 0 3 \\
0 1 3 \\
0 0 3 \\
};

\addplot3 [color=blue!10!white,solid,thick, dashed, fill opacity = 0.1, fill = blue!10!white,forget plot]
  table[row sep=crcr]{%
0 0 4 \\
0 4 4 \\
0 12 4 \\
4 8 4 \\
4 0 4 \\
0 0 4 \\
};

\addplot3 [color=blue!10!white,draw=none, mark=*, mark options={solid, blue!10!white}]
 table[row sep=crcr] {%
4 8 4 \\
3 9 4 \\
2 10 4 \\
1 11 4 \\
0 12 4 \\
4 6 4 \\
3 7 4 \\
2 8 4 \\
1 9 4 \\
0 10 4 \\
3 6 4 \\
2 7 4 \\
1 8 4 \\
0 9 4 \\
4 4 4 \\
3 5 4 \\
2 6 4 \\
1 7 4 \\
0 8 4 \\
3 4 4 \\
2 5 4 \\
1 6 4 \\
0 7 4 \\
4 2 4 \\
3 3 4 \\
2 4 4 \\
1 5 4 \\
0 6 4 \\
3 2 4 \\
2 3 4 \\
1 4 4 \\
0 5 4 \\
4 0 4 \\
3 1 4 \\
2 2 4 \\
1 3 4 \\
0 4 4 \\
3 0 4 \\
2 1 4 \\
1 2 4 \\
0 3 4 \\
2 0 4 \\
1 1 4 \\
0 2 4 \\
1 0 4 \\
0 1 4 \\
0 0 4 \\
};

\draw (axis cs:1,3,1) node{\textcolor{blue}{$Q$}};

\end{axis}
\end{tikzpicture}
\caption{Monomial basis of the algebra $(S_{\phi})_\mathrm{in}$ from Example \ref{ex:firstkhov}.}
\label{fig:semigroup}
\end{figure}
\end{example}
\subsection{Khovanskii bases}
Motivated by \cref{ex:firstkhov} we have the following definition.
\begin{definition}[Khovanskii basis]
A subset of elements of $S_\phi$ is called a \emph{Khovanskii basis} if the leading monomials of those elements generate the initial algebra $(S_{\phi})_\mathrm{in}$.
\end{definition}
\begin{remark}
Khovanskii bases are defined as more general objects, see \cite{kaveh2012newton}. In the special setting considered in this paper, they are sometimes called \emph{SAGBI bases}, or \emph{canonical subalgebra bases}.
\end{remark}
The importance of the concept of Khovanskii bases is given by the next theorem.

\begin{theorem} \label{thm:mainkhov}
Consider $\theta_1, \ldots, \theta_m \in \C[x_1, \ldots, x_n]$ and $\phi = (\theta_1, \ldots, \theta_m): \C^n \dashrightarrow \p^{m-1}$ with image closure $X = \mathrm{zcl}({{\rm im}\, \phi})$. Let $\preccurlyeq$ be a term order on  $\C[s, x_1, \ldots, x_n]$ compatible with its grading \eqref{eq:grading}, and consider the graded algebra $S_\phi = \C[s\theta_1, \ldots, s\theta_m]$. Furthermore, define
$$
Q = {\rm Conv}\big\{ \alpha_j \in \Z^n \mid (\alpha_j, 1) \text{ is the exponent of the $\preccurlyeq$-leading monomial of $s\theta_j$} \big \} \, \subset \R^n.
$$
If the polynomials $s\theta_1, \ldots, s\theta_m$ form a Khovanskii basis for $S_\phi$ and $\dim Q = n$, then
$$\deg X = n! \cdot {\rm Vol}(Q).$$
\end{theorem}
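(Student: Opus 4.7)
The plan is to reduce the computation of $\deg X$ to a toric/lattice-point calculation by passing through the initial algebra $(S_\phi)_{\mathrm{in}}$, and then invoking Kushnirenko's theorem recalled earlier in the section. First I would use the proposition proved just above: the Hilbert functions of $S_\phi$ and $(S_\phi)_{\mathrm{in}}$ coincide, hence so do their Hilbert polynomials. Under the standing assumption that $\phi$ is injective on a dense open subset of $\C^n$, the image closure $X$ has dimension $n$, so $\deg X$ equals $n!$ times the leading coefficient of the common Hilbert polynomial.

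Next I would exploit the Khovanskii basis hypothesis. By definition the $\C$-algebra $(S_\phi)_{\mathrm{in}}$ is generated by the leading monomials of $s\theta_1, \ldots, s\theta_m$, each of the form $s \cdot x^{\alpha_j}$. Consequently $(S_\phi)_{\mathrm{in}}$ is a monomial subalgebra of $\C[s, x_1, \ldots, x_n]$, and its degree $\ell$ graded piece has basis
\[ \{\, s^\ell x^\beta \mid \beta \in \ell \cdot A \,\}, \qquad A = \{\alpha_1, \ldots, \alpha_m\}, \]
where $\ell \cdot A$ denotes the $\ell$-fold Minkowski sum of $A$ with itself. This identifies $(S_\phi)_{\mathrm{in}}$ with the homogeneous coordinate ring of the toric variety $X_A \subset \p^{m-1}$ obtained as the image closure of the monomial map $x \mapsto [x^{\alpha_1} : \cdots : x^{\alpha_m}]$.

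The third step is to apply Kushnirenko's theorem to $X_A$: since $\dim Q = n$, the leading coefficient of the Hilbert polynomial of $(S_\phi)_{\mathrm{in}}$ equals $\mathrm{Vol}(Q)$, so $\deg X_A = n! \cdot \mathrm{Vol}(Q)$. Combined with the first step, this gives $\deg X = n! \cdot \mathrm{Vol}(Q)$.

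I expect the main bookkeeping obstacle to be matching the two notions of ``degree'' that appear: the one read off from the grading of $S_\phi$ by the $s$-exponent, and the projective one used in the statement. This amounts to identifying $S_\phi$, graded as in \eqref{eq:grading}, with the homogeneous coordinate ring of $X \subset \p^{m-1}$ under the embedding provided by $\phi$; the injectivity of $\phi$ on a dense open set and the construction $S_\phi = \C[s\theta_1, \ldots, s\theta_m]$ make this essentially automatic, but it is the one point in the argument where the algebra and the geometry need to be carefully aligned before the three steps above can be combined into a proof.
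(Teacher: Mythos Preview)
The paper does not include an explicit proof of this theorem; it is stated as a known result, with the surrounding text supplying the two main ingredients (the proposition that $S_\phi$ and $(S_\phi)_{\mathrm{in}}$ have the same Hilbert function, and Kushnirenko's theorem for monomial maps). Your sketch assembles exactly these ingredients in the intended way, so it matches the argument the paper implicitly has in mind.

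One small remark on the ``bookkeeping obstacle'' you flag at the end: you do not actually need to identify $S_\phi$ with the full homogeneous coordinate ring $\C[y_0,\ldots,y_m]/I(X)$. These can differ in low degrees when $X$ is not projectively normal, but the paper sidesteps this entirely by \emph{defining} $\deg X$ as $n!$ times the leading coefficient of the Hilbert polynomial of $S_\phi$ and citing \cite{cutkosky2018introduction} for the equivalence with the geometric definition. So the alignment you worry about is already built into the paper's conventions, and no extra argument is needed there.
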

When all $\theta_j$ are monomials, the generators $s\theta_1,\dots,s\theta_m$ automatically form a Khovanskii basis of $S_{\phi} = \C[s\theta_1, \ldots, s\theta_m]$. Example \ref{ex:firstkhov} shows that this happens in other cases. However, we warn the reader that, in general, generators of the algebra may not form a Khovanskii basis.

Important for us is that there is a finite algorithm to check whether a set of algebra generators $s\theta_1, \ldots, s\theta_m$ is a Khovanskii basis for $S_\phi = \C[s\theta_1, \ldots, s\theta_m]$, see \cite[Corollary 11.5]{sturmfels1996grobner}. We will use this below to prove that the algebras encountered in coupled oscillator systems come with a finite set of generators that is a Khovanskii basis. In this case, \cref{thm:mainkhov} implies that the degree of our variety $X = \mathrm{zcl}({ {\rm im} \, \phi})$ can be computed in terms of the volume of a polytope.

Note that, once \cite[Corollary 11.5]{sturmfels1996grobner} has been applied to show \eqref{eq:khovprop}, Theorem \ref{thm:mainkhov} provides a new proof of Theorem \ref{thm:mainN=1}. Indeed, the degree $\deg(X) = 2! \cdot {\rm Area}(Q) = 5$ is an upper bound for the general number of solutions to \eqref{eq:N=1}. The lower bound is provided by the instance with 5 distinct solutions that we construct in the proof of \cref{thm:mainN=1} in \Cref{sec:3}.
Similarly, \cref{thm:mainkhov} will help us prove the number of solutions for arbitrary $N$.

\begin{remark}[Newton-Okounkov Bodies]
Theorem \ref{thm:mainkhov} can be generalized to the case where our generators $s\theta_j$ do not form a Khovanskii basis of $S_\phi$. For each $\ell$ we obtain the polytope $Q_\ell$ as the convex hull of the exponents of all monomials in $((S_{\phi})_\mathrm{in})_\ell$.
We consider the sequence of polytopes $\tilde{Q}_\ell = (1/\ell) Q_\ell$, which satisfies $\tilde{Q}_\ell \subset \tilde{Q}_{\ell+1}$. The limit $Q_\infty =\lim_{\ell \rightarrow\infty}\tilde{Q}_\ell=\overline{\bigcup_\ell \tilde{Q}_\ell}$  is a convex body, known as
the \emph{Newton-Okounkov-Body} of $S_\phi$. The key result of \cite{kaveh2012newton} is that the normalized volume of $Q_\infty$ is the degree of
$X = \mathrm{zcl}({\rm im}\, \phi)$,
assuming $\phi$ is injective. Note that, in the case of Theorem \ref{thm:mainkhov}, we have $Q = Q_1 = \tilde{Q}_\ell = Q_\infty$.
\end{remark}

\section{The case of $N$ oscillators} \label{sec:5}
In this section, we apply the tools from Section \ref{sec:4} to compute the general number of solutions to the equations~ \eqref{eq:N}. Recall that we have $2N$ equations which come in $N$ pairs:
\begin{align}\label{eq:N-sec5}
\begin{split}
f_i(u,v) &=a_{1,i} u_i \, (u_i^2+v_i^2) + a_{2,i} \, u_i + a_{3,i} \, v_i + a_{4,i} + \sum_{j\ne i} c_{j,i}v_j = 0 ,\\
g_i(u,v) &= b_{1,i} \, v_i \, (u_i^2+v_i^2) + b_{2,i} \, u_i \, + b_{3,i} \, v_i \, + b_{4,i} +  \sum_{j\ne i} d_{j,i}u_j = 0.
\end{split}
\end{align}
Here $u_i,v_i$ are variables, and $a_{j,i}, b_{j,i}, c_{j,i}$ and $d_{j,i}$ are parameters. For brevity, we will denote the number of isolated complex solutions for general parameter values by $\delta_N$. We first observe a lower bound by choosing a special set of parameters.

\begin{lemma} \label{lem:lowerbound}
The general number of solutions $\delta_N$ is at least $5^N$.
\end{lemma}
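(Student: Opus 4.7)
The plan is to exhibit one specific choice of parameters at which the system already has $5^N$ isolated solutions, and then invoke the lower semicontinuity of the isolated solution count from \cref{lem:LSC} to conclude that the general number of isolated solutions satisfies $\delta_N \ge 5^N$.

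The concrete parameter choice is to decouple the oscillators. Set all of the coupling parameters to zero, that is, $c_{j,i}=d_{j,i}=0$ for every $i$ and every $j\ne i$. Then the equations in \eqref{eq:N-sec5} split into $N$ independent subsystems in disjoint pairs of variables $(u_i,v_i)$, and each subsystem is exactly of the form \eqref{eq:N=1} considered in \cref{sec:3}. By \cref{thm:mainN=1} we can, for each $i=1,\ldots,N$, choose the remaining parameters $(a_i,b_i)\in\C^8$ so that the $i$-th subsystem has exactly $5$ isolated solutions in $\C^2$. The solution set of the full decoupled system is then the Cartesian product of the solution sets of the $N$ subsystems, and every point in this product is isolated in $\C^{2N}$. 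Hence this particular choice of parameters yields $5^N$ isolated complex solutions.

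Now we conclude by semicontinuity. Let $\Phi$ denote the number of isolated complex solutions to \eqref{eq:N-sec5} as a function of the parameters $(a_i,b_i,c_i,d_i\mid i=1,\ldots,N)\in\C^{2(N+3)N}$. By \cref{lem:LSC}, the sublevel set $\{\Phi\le 5^N-1\}$ is Zariski closed in $\C^{2(N+3)N}$. Since we just exhibited a parameter value at which $\Phi\ge 5^N$, this sublevel set is a proper Zariski closed subset, so its complement $\{\Phi\ge 5^N\}$ is a nonempty Zariski open (hence dense) subset. Therefore, for general parameters, $\delta_N\ge 5^N$.

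There is no real obstacle in this direction; the only points to be careful about are that the decoupling actually produces an isolated zero in $\C^{2N}$ rather than a positive-dimensional component, which is immediate from the product structure, and that the hypothesis of \cref{lem:LSC} (the number of equations equals the number of variables) is satisfied here, which it is since we have $2N$ equations in $2N$ variables $(u,v)$. The genuine work will be in the matching upper bound $\delta_N\le 5^N$, which is what the Khovanskii basis machinery of \cref{sec:4} is being set up for.
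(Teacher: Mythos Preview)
Your proof is correct and follows essentially the same approach as the paper: decouple by setting $c_{j,i}=d_{j,i}=0$, invoke \cref{thm:mainN=1} on each of the $N$ independent subsystems to get $5^N$ isolated solutions, and then apply \cref{lem:LSC} to conclude $\delta_N\ge 5^N$. The paper's version is terser but the argument is identical.
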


\begin{proof}
We consider systems of the form \eqref{eq:N-sec5} with parameters $c_{j,i}, d_{j,i}$ specialized to $0$ and $a_{j,i}, b_{j,i}$ general. Such systems correspond to decoupled oscillators and are given by $N$ independent systems of type~\eqref{eq:N=1}. There are $5^N$ solutions by Theorem \ref{thm:mainN=1}. The lemma follows from Lemma \ref{lem:LSC}: the number of isolated solutions with specialized parameters is at most $\delta_N$.
\end{proof}

To prove Theorem~\ref{thm:main}, it remains to show the upper bound $\delta_N \leq 5^N$. For this, we proceed as in Section \ref{sec:4}. Let $u_1, \ldots, u_N, v_1, \ldots, v_N$ be coordinates on $\C^{2N}$ and consider the polynomials
\begin{equation} \label{eq:paramY}
\theta_0 = 1,\;
\theta_{i,1} = u_i,\;
\theta_{i,2} = v_i,\;
\theta_{i,3} = (u_i^2 + v_i^2)u_i,\;
\theta_{i,4} = (u_i^2 + v_i^2)v_i, \quad i = 1, \ldots, N.
\end{equation}
These define an injective parametrization map $\phi: \C^{2N} \dashrightarrow \p^{4N}$ whose image closure we denote
$$Y_N := \mathrm{zcl}({\rm im} \, \phi) \subset \p^{4N}.$$
We have that $Y_N$ is a $2N$-dimensional projective variety, and $f_i = g_i = 0, i = 1, \ldots, N$, push forward to $2N$ linear equations on $Y_N$. Here is a consequence.

\begin{lemma} \label{lem:upperbound}
The general number of solutions $\delta_N$ is at most $\deg(Y_N)$.
\end{lemma}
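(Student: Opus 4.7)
The plan is to translate the polynomial system $\mathcal F_N=0$ into a linear section of the variety $Y_N\subset \p^{4N}$ and then invoke a Bézout-style degree bound. Reading off \eqref{eq:N-sec5}, each equation is a $\C$-linear combination of the $4N+1$ functions from \eqref{eq:paramY}:
\begin{align*}
f_i &= a_{1,i}\theta_{i,3}+a_{2,i}\theta_{i,1}+a_{3,i}\theta_{i,2}+a_{4,i}\theta_0+\sum_{j\neq i}c_{j,i}\theta_{j,2},\\
g_i &= b_{1,i}\theta_{i,4}+b_{2,i}\theta_{i,1}+b_{3,i}\theta_{i,2}+b_{4,i}\theta_0+\sum_{j\neq i}d_{j,i}\theta_{j,1}.
\end{align*}
Read as linear forms on $\p^{4N}$, these define $2N$ hyperplanes $L_1,\ldots,L_{2N}\subset \p^{4N}$ whose preimages under $\phi$ are precisely the hypersurfaces $\{f_i=0\}$ and $\{g_i=0\}$ in $\C^{2N}$.

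Since the coordinates $\theta_{i,1}=u_i$ and $\theta_{i,2}=v_i$ already separate points of $\C^{2N}$, the map $\phi$ is injective on its domain, so the solutions of $\mathcal F_N=0$ in $\C^{2N}$ are in bijection with $\phi(\C^{2N})\cap L_1\cap\cdots\cap L_{2N}$, which is contained in $Y_N\cap L_1\cap\cdots\cap L_{2N}$. Combining \Cref{lem:lowerbound} with \Cref{lem:LSC} shows that the generic count $\delta_N$ is a finite positive integer, so this intersection is a finite set of exactly $\delta_N$ points for generic parameters.

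The lemma then reduces to the claim that for any $2N$ hyperplanes $L_1,\ldots,L_{2N}$, the number of isolated points of $Y_N\cap L_1\cap\cdots\cap L_{2N}$ is at most $\deg Y_N$. I will prove this by the classical inductive Bézout argument: intersect $Y_N$ first with $L_1$, decomposing the result into a proper $(2N-1)$-dimensional part (of total degree at most $\deg Y_N$) and any top-dimensional excess, then iterate with $L_2,\ldots,L_{2N}$. The excess components do not produce further isolated intersection points, and the affine solutions form a subset of the isolated points of the final intersection, so combining this with the previous paragraph yields $\delta_N \le \deg Y_N$.

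The main obstacle is this last step: even for generic parameters in our restricted $2N(N+3)$-dimensional family, the intersection $Y_N\cap L_1\cap\cdots\cap L_{2N}$ may carry positive-dimensional components, typically supported on the hyperplane at infinity $\{\theta_0=0\}$, and one must verify that the isolated (affine) count is bounded by $\deg Y_N$ regardless. The inductive Bézout calculation above addresses this directly; as a backup one may embed our configuration in the full $(4N+1)\cdot 2N$-dimensional family of $2N$-tuples of hyperplanes in $\p^{4N}$, where the generic count equals $\deg Y_N$, and apply \Cref{lem:LSC} to conclude that the count for our sub-family cannot exceed this value.
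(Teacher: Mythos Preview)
Your argument is correct and follows the same approach the paper intends: the paper states this lemma without proof, as an immediate consequence of the Section~\ref{sec:4} discussion (the equations $f_i,g_i$ are linear combinations of the $\theta$'s, $\phi$ is injective, and the degree of $Y_N$ bounds the number of linear-section points). You have simply made explicit what the paper leaves implicit, including the care needed because our $2N$ hyperplanes are not fully generic in $\p^{4N}$; your ``backup'' via embedding into the full family of hyperplane coefficients and invoking \Cref{lem:LSC} is exactly the mechanism the paper's framework relies on.
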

We are left with showing that $\deg(Y_N) = 5^N$. For this, we seek to apply Theorem \ref{thm:mainkhov}. Throughout the section, we use the following monomial order on $\C[s,u_1, \ldots, u_N, v_1, \ldots, v_N]$. In analogy with Example \ref{ex:firstkhov}, $u^{\alpha_1}v^{\alpha_2}s^{\alpha_3} \preccurlyeq u^{\beta_1}v^{\beta_2}s^{\beta_3}$,
if the first nonzero entry of $M \cdot (\alpha - \beta)$ is negative, where $\alpha = (\alpha_1, \alpha_2, \alpha_3)^\top
\neq \beta = (\beta_1, \beta_2, \beta_3)^\top$ and $M$ is the $(2N+1) \times (2N+1)$ matrix
\begin{equation}  \label{eq:monorder}
M = \begin{bmatrix}
 0 & \cdots & 0 & 1 \\
 0 & \cdots & -1 & 0 \\
 0 & \iddots & 0 & 0 \\
 -1 & \cdots & 0 & 0
\end{bmatrix}.
\end{equation}
Here $\alpha_1,\alpha_2,\beta_1,\beta_2$ are $N$-vectors and $u^{\alpha_i}$ is the standard multi-index notation.

\subsection{The Oscillator Polytope}
We proceed by studying the polytope which plays the role of $Q$ in Theorem \ref{thm:mainkhov}. We call it the \emph{Oscillator Polytope}\footnote{The definition of the polytope does not just depend on the equations, but also on the choice of a term order, hence is not canonical. We still find the name appropriate, because our choice of term order gives a particularly simple polytope. Moreover, the name is supposed to underline the deep connection between counting zeros and convex geometry that is exploited here.}.

The initial terms of the polynomials $s h_0, sh_{i,j}$ from \eqref{eq:paramY} that correspond to the monomial order in~\eqref{eq:monorder} are $ s \cdot \{1,u_1,v_1,u_1v_1^2,v_1^3,\dots ,u_N,v_N,u_Nv_N^2,v_N^3\}$. The oscillator polytope is the following $(2N)$-dimensional polytope in $\R^{2N} = (\R^2)^N$:
\begin{equation} \label{eq:PN}
Q_N = \Conv \Big ( \{ 0 \} \cup \bigcup_{i = 1}^N \left \{ e_{i,1} \, , \, 3 e_{i,2} \, ,  \, e_{i,1} + 2 e_{i,2} \right \} \Big ) \subset (\R^2)^N,
\end{equation}
where $e_{i,1}, e_{i,2}$ are the standard basis vectors on the $i$-th copy of $\R^2$. Our goal in this subsection is to prove the following Lemma.
\begin{lemma} \label{lem:volQN}
The volume of the oscillator polytope $Q_N$ satisfies
$(2N)! \cdot {\rm Vol}(Q_N) = 5^N$.
\end{lemma}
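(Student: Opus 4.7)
The plan is to exploit the fact that $Q_N$ is the \emph{free sum} (direct sum at the origin) of $N$ planar quadrilaterals sitting in pairwise-orthogonal $2$-dimensional coordinate planes. Let $P = \Conv((0,0),(1,0),(1,2),(0,3)) \subset \R^2$, and let $P_i \subset (\R^2)^N$ be the image of $P$ in the $i$-th factor. From the vertex description in \eqref{eq:PN} it is immediate that $Q_N = P_1 \oplus \cdots \oplus P_N$, and a shoelace computation gives $\vol(P) = 5/2$, so $2!\,\vol(P_i) = 5$. The lemma is therefore equivalent to the ``free-sum'' volume identity $(2N)!\,\vol(\bigoplus_i P_i) = \prod_i 2!\,\vol(P_i)$.

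To prove the identity I would give an explicit triangulation of $Q_N$ by $2^N$ simplices. In each plane, split $P_i$ along the diagonal from $0$ to $v_{i,3} := e_{i,1}+2e_{i,2}$ into two triangles
\[
T_{i,a} = \Conv(0,v_{i,1},v_{i,3}), \qquad T_{i,b} = \Conv(0,v_{i,3},3e_{i,2}),
\]
of normalized areas $|\det[v_{i,1}\,|\,v_{i,3}]| = 2$ and $|\det[v_{i,3}\,|\,3e_{i,2}]| = 3$ respectively. For each $\sigma \in \{a,b\}^N$, let $\Delta_\sigma$ be the $2N$-simplex whose vertex set is the origin together with the two non-origin vertices of $T_{i,\sigma_i}$ for each $i$. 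Since these non-origin vertices live in pairwise-orthogonal copies of $\R^2$, the $(2N) \times (2N)$ matrix of vertex columns is block-diagonal with $N$ blocks of size $2 \times 2$, and hence
\[
(2N)!\,\vol(\Delta_\sigma) = \prod_{i=1}^N c_{\sigma_i}, \qquad c_{\sigma_i} \in \{2,3\}.
\]

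Granting that $\{\Delta_\sigma\}_{\sigma \in \{a,b\}^N}$ is a triangulation of $Q_N$, summing over $\sigma$ distributes as
\[
(2N)!\,\vol(Q_N) = \sum_{\sigma \in \{a,b\}^N} \prod_{i=1}^N c_{\sigma_i} = \prod_{i=1}^N (2+3) = 5^N,
\]
which is the claim. The main obstacle is verifying the triangulation. Covering is checked as follows: any $p = (p_1,\ldots,p_N) \in Q_N$ satisfies $p_i \in P_i$ and $\sum_i \|p_i\|_{P_i} \le 1$ in the Minkowski gauge (this is exactly the characterisation of membership in a free sum, obtained by iterating the two-factor case $P \oplus P' = \{(x,y) : \|x\|_P + \|y\|_{P'} \le 1\}$); choosing $\sigma_i \in \{a,b\}$ so that $p_i \in T_{i,\sigma_i}$ and reading off barycentric coordinates writes $p$ as a convex combination of the vertices of $\Delta_\sigma$, with the origin-coefficient $1 - \sum_i \|p_i\|_{P_i} \ge 0$. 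Disjointness of interiors holds because the diagonals $[0,v_{i,3}]$ separate the interiors of $T_{i,a}$ and $T_{i,b}$ in each plane, and this planar separation propagates to the $\Delta_\sigma$ via the uniqueness of the gauge decomposition in each factor. This last verification is routine bookkeeping for free sums but is the only step requiring any care.
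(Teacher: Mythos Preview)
Your argument is correct and is essentially the same as the paper's. The paper also observes $Q_N = Q_1 \oplus \cdots \oplus Q_1$ and proves the normalized-volume multiplicativity for subdirect sums by exactly your device: triangulate each summand by simplices through the origin, form the product simplices $\Conv(\Delta_k \cup \Delta'_\ell)$, and use that the normalized volume of such a join is the product of the factors' normalized volumes (your block-diagonal determinant). The only cosmetic difference is that the paper isolates the free-sum volume identity as a standalone lemma and corollary and then applies it, whereas you unroll the triangulation explicitly with the specific $2+3$ split of $Q_1$; the mathematical content is identical.
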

For varying $N$, the polytopes~$Q_N$ can be described using \emph{subdirect sums}.
\begin{definition}
Let $P\subset \R^n, Q\subset \R^m$  be two polytopes. Their subdirect sum $P\oplus Q\subset \R^{n+m}$ is given by
$
P\oplus Q =\Conv \left( \{(p,0) \mid p\in P\} \cup \{(0,q) \mid q\in Q\}  \right).
$
\end{definition}
The oscillator polytope is obtained by taking $N-1$ subdirect sums:
$$Q_N= Q_1\oplus \ldots \oplus Q_1.$$
This structure of $Q_N$ can be used to compute its volume. The following lemma uses the \emph{normalized volume}, which for a polytope $P$ in $\R^n$ is defined as ${\rm vol}(P) = n! \cdot {\rm Vol}(P)$, where ${\rm Vol}(P)$ is the Euclidean volume.

\begin{lemma}\label{lem:subdirectvol}
Let $P\subset \R^n, Q\subset \R^m$  be two polytopes containing the origins of $\R^n$ and $\R^m$ respectively. Assume that $P$ and $Q$ admit triangulations $P=\bigcup_{k \in K} \Delta_k$ and $Q = \bigcup_{\ell \in L} \Delta'_\ell$ such that $0\in \Delta_k$, $0\in \Delta'_\ell$ for any $k\in K$, $\ell\in L$.
Then the normalized volume of the subdirect sum~$P\oplus Q$ is the product of normalized volumes of $P$ and $Q$:
\[
\vol(P\oplus Q)=\vol(P)\cdot\vol(Q).
\]
\end{lemma}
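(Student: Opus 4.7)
My plan is to assemble a triangulation of $P\oplus Q$ from the given triangulations of $P$ and $Q$, and then to read off the normalized volume from a block-diagonal determinant computation.

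First, observe that the hypothesis $0 \in \Delta_k$ for every $k\in K$, combined with the triangulation property, forces $0$ to be a common vertex of each $\Delta_k$. Indeed, if $0$ lay in the relative interior of a positive-dimensional face of some $\Delta_k$, it could not simultaneously lie in every other maximal simplex, since two simplices of a triangulation meet only in a common face. The same reasoning applies to the $\Delta'_\ell$. For each pair $(k,\ell)\in K\times L$, define the join-type body
\[
\sigma_{k,\ell} := \Conv\bigl((\Delta_k\times\{0\}) \cup (\{0\}\times \Delta'_\ell)\bigr) \subset \R^{n+m}.
\]
Let $v_1,\ldots,v_n$ be the non-zero vertices of $\Delta_k$ and $w_1,\ldots,w_m$ those of $\Delta'_\ell$; then the non-origin vertices of $\sigma_{k,\ell}$ are $(v_1,0),\ldots,(v_n,0),(0,w_1),\ldots,(0,w_m)$, and stacking them as columns of an $(n+m)\times (n+m)$ matrix yields the block-diagonal matrix $\left(\begin{smallmatrix} V_k & 0 \\ 0 & W_\ell \end{smallmatrix}\right)$, with $V_k$ having columns $v_i$ and $W_\ell$ having columns $w_j$. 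Since $\Delta_k$ and $\Delta'_\ell$ are genuine simplices, these column vectors are linearly independent, so $\sigma_{k,\ell}$ is an $(n{+}m)$-dimensional simplex whose normalized volume factors:
\[
\vol(\sigma_{k,\ell}) = \bigl|\det V_k\bigr|\cdot \bigl|\det W_\ell\bigr| = \vol(\Delta_k)\cdot \vol(\Delta'_\ell).
\]

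Next I would verify that $\{\sigma_{k,\ell}\}_{(k,\ell)\in K\times L}$ is a triangulation of $P\oplus Q$. Any point of $P\oplus Q$ is a convex combination of elements of $(P\times\{0\})\cup(\{0\}\times Q)$, and by grouping coefficients it can be rewritten in the form $(\alpha p,(1-\alpha)q)$ with $p\in P$, $q\in Q$, $\alpha\in[0,1]$; choosing $\Delta_k\ni p$ and $\Delta'_\ell\ni q$, the convex combination $\alpha(p,0)+(1-\alpha)(0,q)$ lands in $\sigma_{k,\ell}$, establishing the cover. The intersection of two such simplices equals
\[
\sigma_{k,\ell}\cap \sigma_{k',\ell'} = \Conv\bigl(((\Delta_k\cap \Delta_{k'})\times\{0\}) \cup (\{0\}\times (\Delta'_\ell\cap \Delta'_{\ell'}))\bigr),
\]
which by the triangulation properties of the given decompositions is a common face of both simplices.

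Summing over all pairs then yields
\[
\vol(P\oplus Q) \;=\; \sum_{k\in K}\sum_{\ell\in L}\vol(\sigma_{k,\ell}) \;=\; \Bigl(\sum_{k\in K}\vol(\Delta_k)\Bigr)\Bigl(\sum_{\ell\in L}\vol(\Delta'_\ell)\Bigr) \;=\; \vol(P)\cdot \vol(Q),
\]
as claimed. The main obstacle is the combinatorial verification in the third paragraph that the $\sigma_{k,\ell}$ do constitute a triangulation (covering together with the face-intersection property); once that is granted, the determinant factorization and the final summation are essentially immediate. The hypothesis that $0$ belongs to every simplex is crucial throughout: it is what makes both the join construction produce a simplex with a block-diagonal vertex matrix and the cover statement work through the representation $(\alpha p,(1-\alpha)q)$.
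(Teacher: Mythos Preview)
Your argument is essentially the paper's proof with the details filled in: both construct the triangulation $P\oplus Q=\bigcup_{k,\ell}\Conv(\Delta_k\cup\Delta'_\ell)$, invoke the factorization $\vol(\Conv(\Delta_k\cup\Delta'_\ell))=\vol(\Delta_k)\,\vol(\Delta'_\ell)$, and sum. One small quibble: your opening claim that $0$ must be a \emph{vertex} of every $\Delta_k$ is not actually forced by the hypothesis (e.g.\ $P=[-1,1]^2$ triangulated along a diagonal through the origin has $0$ in the relative interior of the shared edge of both triangles), but this is harmless---either first refine the given triangulations by starring at $0$, or note that the volume identity for $\sigma_{k,\ell}$ holds for any simplices containing $0$, after which your block-determinant computation goes through.
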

\begin{proof}
We have the triangulation
$
P\oplus Q = \bigcup_{k \in K} \bigcup_{\ell \in L} \Conv( \Delta_k \cup \Delta'_\ell).
$
Therefore, the normalized volume of $P\oplus Q $ can be computed as
$$\vol(P\oplus Q) = \sum_{k\in K,\ell\in L}\vol(\Conv( \Delta_k \cup \Delta'_\ell)).$$ Since $0\in \Delta_k$ and $0\in \Delta'_\ell$ for any $k\in K$, $\ell\in L$ we have $\vol(\Conv( \Delta_k \cup \Delta'_\ell))=\vol(\Delta_k)\cdot\vol(\Delta'_\ell)$ and thus we conclude that $\vol(P\oplus Q) =  \vol(P) \cdot \vol(Q)$.
\end{proof}

An important case in which the assumptions of Lemma~\ref{lem:subdirectvol} are satisfied is when the polytopes $P$ and $Q$ have  a vertex at the origin. In that case, one can produce desired triangulations in the following way. Let $\mathcal{F}$ be the set of faces of $P$ which do not contain the origin. Consider a triangulation $\bigcup_{F\in\mathcal{F}}F = \bigcup_{k \in K} \Delta_k$, such that each simplex $\Delta_k$ belongs to a unique face~$F\in \mathcal{F}$. A desired triangulation of $P$ can be constructed as
$
P=\bigcup_{k \in K}\Conv(\{0\}
\cup \Delta_k).
$
Consequently, we obtain the following corollary of Lemma~\ref{lem:subdirectvol}.

\begin{corollary}\label{cor:subdirectvol}
Let $P\subset \R^n, Q\subset \R^m$  be two polytopes having a vertex at the origins of $\R^n$ and $\R^m$ respectively. Then the normalized volume of the subdirect sum $P\oplus Q$ is the product of normalized volumes of $P$ and $Q$:
\[
\vol(P\oplus Q)=\vol(P)\cdot\vol(Q).
\]
\end{corollary}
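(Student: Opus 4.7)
The plan is direct: construct triangulations of $P$ and $Q$ that meet the hypotheses of Lemma \ref{lem:subdirectvol}, then invoke that lemma. The construction is the standard ``coning'' procedure sketched in the paragraph preceding the corollary. Specifically, since $0$ is a vertex of $P$, let $\mathcal{F}_P$ denote the collection of faces of $P$ that do not contain $0$, and fix a triangulation $\bigcup_{F\in\mathcal{F}_P} F = \bigcup_{k \in K} \Delta_k$ in which each simplex $\Delta_k$ lies in a unique face $F \in \mathcal{F}_P$. Form
\[
\mathcal{T}_P = \bigl\{ \Conv(\{0\} \cup \Delta_k) : k \in K \bigr\},
\]
and do the analogous construction for $Q$ to obtain $\mathcal{T}_Q$.

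The main step is to check that $\mathcal{T}_P$ is a genuine triangulation of $P$, i.e.\ its simplices cover $P$, have pairwise disjoint relative interiors, and meet along common faces. The vertex hypothesis enters here in an essential way: because $0$ is a vertex of $P$, for every $x \in P \setminus \{0\}$ the ray from $0$ through $x$ exits $P$ at a unique boundary point $y$, and $y$ must lie on some face not containing $0$ (otherwise the segment from $0$ to $y$ would continue inside that face beyond $y$, contradicting maximality). Hence $y$ belongs to the relative interior of a unique $\Delta_k \subseteq \bigcup_{F \in \mathcal{F}_P} F$, and $x$ belongs to the relative interior of $\Conv(\{0\} \cup \Delta_k)$. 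Dimensions add correctly since $0 \notin \mathrm{aff}(\Delta_k)$ (because $\Delta_k$ lies in a face of $P$ not containing $0$), so each $\Conv(\{0\} \cup \Delta_k)$ is indeed a simplex of the right dimension; the disjoint-interior and face-matching conditions then transfer from the triangulation of $\bigcup_{F \in \mathcal{F}_P} F$.

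By construction, every simplex of $\mathcal{T}_P$ has $0$ as a vertex, and likewise for $\mathcal{T}_Q$. In particular, the origin is contained in every simplex of both triangulations, which is the assumption of Lemma \ref{lem:subdirectvol}. Applying that lemma to $\mathcal{T}_P$ and $\mathcal{T}_Q$ yields
\[
\vol(P \oplus Q) = \vol(P) \cdot \vol(Q),
\]
as claimed.

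The only non-routine step is the triangulation verification; it relies crucially on $0$ being a vertex of $P$ rather than merely a point of $P$, which is exactly why this corollary strengthens Lemma \ref{lem:subdirectvol} to a convenient hypothesis that is easy to check in applications such as the oscillator polytope $Q_N$.
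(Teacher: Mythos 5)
Your proof is correct and follows exactly the approach the paper uses: cone the origin over a triangulation of the faces not containing it, verify this gives a triangulation of $P$ (and $Q$) in which every simplex contains $0$, then apply Lemma~\ref{lem:subdirectvol}. You simply spell out in more detail why the coning produces a genuine triangulation, which the paper leaves implicit in the paragraph preceding the corollary.
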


\begin{remark} It is crucial that we work with \emph{normalized} volume in Lemma~\ref{lem:subdirectvol}. For example, let $\Delta_n =\Conv\{0,e_1,\ldots e_n\}$ be a basis simplex. The Eucledian volume of $\Delta_n$ equals $1/n!$ and hence the normalized volume is $\vol(\Delta_n)=1$. Evidently, we have $\Delta_n\oplus \Delta_k = \Delta_{n+k}$, so Euclidean volumes are not multiplicative but the normalized volumes are. \end{remark}

We apply \cref{cor:subdirectvol} to compute the volume of the oscillator polytope.

\begin{proof}[Proof of Lemma \ref{lem:volQN}]
Since $Q_N= Q_1\oplus \ldots \oplus Q_1$ and $Q_1$ has a vertex at the origin,  $\vol(Q_N)=\vol(Q_1)^N$ by \cref{cor:subdirectvol}. The lemma follows from the fact that $\vol(Q_1)=5$, which can be computed directly.
\end{proof}

\subsection{A toric ideal} To apply Theorem \ref{thm:mainkhov}, we need to show that the polynomials $s h_0, s h_{i,j}$ from \eqref{eq:paramY} form a Khovanskii basis for the subalgebra they generate.  For this, we want to apply \cite[Theorem 2.17]{Kaveh-Manon}, which requires us to study the toric variety corresponding to our leading monomials. Let $X_N \subset \p^{4N}$ be the projective toric variety obtained from the monomial map
\begin{equation} \label{eq:paramX}
(u_1,v_1,\dots,u_N,v_N)\mapsto [1:u_1:v_1:u_1v_1^2:v_1^3:\dots :u_N:v_N:u_Nv_N^2:v_N^3].
\end{equation}
On $\p^{4N}$ we will be using the coordinate system $y_0,y_{1,1},y_{1,2},y_{1,3},y_{1,4},y_{2,1},\dots,y_{N,4}$.
Our next goal is to find binomial generators of the defining ideal $I(X_N)$. We consider the affine chart $\tilde{X}_N \subset \C^{4N}$ of $X_N$ corresponding to $\{y_0 \neq 0 \}$. This is the $2N$-dimensional affine toric variety obtained from
\[(u_1,v_1,\dots,u_N,v_N)\mapsto (u_1,v_1,u_1v_1^2,v_1^3,\dots ,u_N,v_N,u_Nv_N^2,v_N^3).\]
On $\C^{4N}$ we will use the coordinate system $y_{1,1},y_{1,2},y_{1,3},y_{1,4},y_{2,1},\dots,y_{N,4}$. To simplify the notation, when $N = 1$ we write $y_j$ instead of $y_{1,j}$. The following technical lemma is needed for Theorem \ref{thm:2elemsubsets}, which allows us to simplify the computation of $I(X_N)$ to that of $I(X_2)$.
The result uses the notion of the \emph{Graver basis} of a toric ideal. A binomial $x^\alpha - x^\beta$ in a toric ideal $J$ is called \emph{primitive} if there exists no other binomial $x^{\alpha'}-x^{\beta'} \in J$ such that~$x^{\alpha'}$ divides~$x^\alpha$ and~$x^{\beta'}$ divides~$x^\beta$. The Graver basis of $J$ is the set of all primitive binomials.
The elements of the Graver basis generate $J$. The reader can consult \cite[Chapter 4]{sturmfels1996grobner} for more information.
\begin{lemma}\label{lem:GraverDecomposition}
Let $J$ be a toric ideal and $G$ its Graver basis. For any binomial $m_1-m_2\in J$ there exist binomials $\mu_{1,i}-\mu_{2,i}\in G$ for $i=1,\dots,k$ such that $m_1=\prod_{i=1}^k \mu_{1,i}$ and $m_2= \prod_{i=1}^k \mu_{2,i}$.
\end{lemma}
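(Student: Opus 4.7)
The plan is to induct on the total degree $\deg(m_1)+\deg(m_2)$, peeling off one Graver basis element at each step. Because $J$ is toric, I would first write $J = I_A$ for the integer matrix $A$ defining it, so that a binomial $x^\alpha - x^\beta$ lies in $J$ exactly when $A\alpha = A\beta$; in this language $m_1 = x^\alpha$ and $m_2 = x^\beta$ with $A\alpha = A\beta$. This linear reformulation is what makes the ``multiplicative'' decomposition in the conclusion natural: factoring a binomial in $J$ corresponds to splitting the equality $A\alpha = A\beta$ into a sum of equalities.

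If $m_1 - m_2$ is itself primitive, then $k=1$ finishes the argument. Otherwise, the negation of primitivity gives a binomial $x^{\alpha'} - x^{\beta'} \in J$ with $x^{\alpha'} \mid m_1$, $x^{\beta'} \mid m_2$, and $(\alpha',\beta') \ne (\alpha,\beta)$. The set of such divisor pairs is finite, so I would pick one minimal with respect to the componentwise partial order on $\mathbb{Z}_{\geq 0}^n \times \mathbb{Z}_{\geq 0}^n$, call it $(\alpha_1,\beta_1)$. I claim the resulting binomial $\mu_1 - \mu_2 := x^{\alpha_1} - x^{\beta_1}$ is primitive, hence belongs to $G$: a strict divisor pair of $(\alpha_1,\beta_1)$ witnessing non-primitivity would, by transitivity of divisibility, also be a strict divisor pair of $(\alpha,\beta)$, contradicting minimality.

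Next I would peel $\mu_1 - \mu_2$ off. Write $m_1 = \mu_1 \cdot m_1''$ and $m_2 = \mu_2 \cdot m_2''$ with $m_1'' = x^{\alpha-\alpha_1}$, $m_2'' = x^{\beta-\beta_1}$. Subtracting $A\alpha_1 = A\beta_1$ from $A\alpha = A\beta$ yields $A(\alpha-\alpha_1) = A(\beta-\beta_1)$, so $m_1'' - m_2'' \in J$. Since a toric ideal contains no monomial, neither $\mu_1$ nor $\mu_2$ can be constant, so $\deg(m_1'') + \deg(m_2'') < \deg(m_1) + \deg(m_2)$. Applying the inductive hypothesis to $m_1'' - m_2''$ yields Graver binomials $\mu_{1,i} - \mu_{2,i} \in G$ for $i=2,\dots,k$ whose first and second monomials multiply to $m_1''$ and $m_2''$ respectively, and concatenating with $(\mu_1,\mu_2)$ gives the desired decomposition.

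The main point requiring care, beyond routine inductive bookkeeping, is the step identifying the minimally chosen divisor pair with a Graver basis element: one must verify that minimality of $(\alpha_1,\beta_1)$ among divisor pairs of $(\alpha,\beta)$ forces primitivity of $x^{\alpha_1} - x^{\beta_1}$. The only other subtlety is the observation that $J$ being a toric ideal excludes monomials from $J$, which is what makes the total-degree induction strictly decrease and therefore terminate.
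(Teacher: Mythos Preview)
Your argument is correct and follows essentially the same induction on total degree as the paper's proof, peeling off one Graver element at a time; you simply supply more detail on why a Graver divisor exists (via minimality among divisor pairs), whereas the paper asserts this directly from the definition of primitivity. One small slip: the fact that $J$ contains no monomials does not by itself prevent $\mu_1=1$, since $1-\mu_2$ is a binomial rather than a monomial; however, the strict degree decrease you need follows anyway because $\mu_1\neq\mu_2$ forces at least one of them to be non-constant.
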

\begin{proof}
The proof is by induction on $\deg m_1+\deg m_2$. If $m_1-m_2\in G$ then we can take $\mu_{1,1}=m_1$ and $\mu_{2,1}=m_2$. Otherwise, there exists $\mu_{1,1}-\mu_{2,1}\in G$ such that $\mu_{1,1}|m_1$ and $\mu_{2,1}|m_2$. Note that the binomial $\frac{m_1}{\mu_{1,1}}-\frac{m_2}{\mu_{2,1}}$
belongs to $J$, as it still encodes an integral relation among exponents of monomials parameterizing the toric variety. Since $\frac{m_1}{\mu_{1,1}}-\frac{m_2}{\mu_{2,1}}$ has lower degree than~$m_1-m_2$, we can conclude by induction.
\end{proof}

Recall that $\tilde{X}_N \subset \C^{4N}$ denotes the ideal of the affine chart of $X_N$ where $\{y_0 \neq 0 \}$.
\begin{lemma}\label{lemma:Graver}
The Graver basis of $I(\tilde{X}_1)$ has five elements:
\[y_1y_2^2-y_3,\ y_2^3-y_4,\ y_1y_4-y_2y_3,\ y_1^2y_2y_4-y_3^2,\ y_1^3y_4^2-y_3^3.\]
\end{lemma}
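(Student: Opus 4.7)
The plan is to translate the claim into a finite computation on the rank-$2$ lattice of exponent vectors. The ideal $I(\tilde X_1)$ is the kernel of the ring map $\C[y_1,\ldots,y_4]\to\C[u,v]$ sending $y_1,y_2,y_3,y_4$ to $u,\,v,\,uv^2,\,v^3$, so its binomials are in bijection with the lattice
\[
L\;=\;\ker\!\begin{pmatrix} 1 & 0 & 1 & 0\\ 0 & 1 & 2 & 3\end{pmatrix}\;=\;\{\,n(a,b):=(a,\,2a-3b,\,-a,\,b)\mid (a,b)\in\Z^2\,\}.
\]
For $n\in L$ with the unique decomposition $n=n^+-n^-$ into non-negative vectors of disjoint supports, the associated binomial is $y^{n^+}-y^{n^-}\in I(\tilde X_1)$. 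Under this dictionary $n$ yields a Graver element iff there is no $n'\in L\setminus\{0,\pm n\}$ with $(n')^+\le n^+$ and $(n')^-\le n^-$ coordinatewise.

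With this reformulation the statement becomes a finite case analysis on the $(a,b)$-plane. First, the five listed binomials correspond to the parameter pairs $(a,b)\in\{(1,0),(0,1),(1,1),(2,1),(3,2)\}$, giving the vectors $(1,2,-1,0)$, $(0,-3,0,1)$, $(1,-1,-1,1)$, $(2,1,-2,1)$, $(3,0,-3,2)$ respectively, and a direct comparison of their $n^{\pm}$-parts shows that none dominates another. To rule out further primitives I would partition the $(a,b)$-plane according to the signs of $a$, $b$, and $2a-3b$, and in each region exhibit one of the five as a strict Graver divisor of $n(a,b)$ whenever $(a,b)$ is not already on the list. Representative checks: $y_1y_2^2-y_3$ dominates $n(a,b)$ on the ray $\{b=0,\,a\ge 2\}$ and on the sector $\{a\ge 1,\,b\le -1\}$; $y_2^3-y_4$ dominates on the ray $\{a=0,\,b\ge 2\}$; $y_1y_4-y_2y_3$ handles $\{a,b\ge 1,\,3b>2a\}\setminus\{(1,1)\}$; $y_1^2y_2y_4-y_3^2$ handles $\{a,b\ge 1,\,2a>3b\}\setminus\{(2,1)\}$; and $y_1^3y_4^2-y_3^3$ handles the ray $\{2a=3b\}$, i.e.\ $(a,b)=(3k,2k)$ with $k\ge 2$. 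All remaining sectors (those with $a\le 0$ or mixed signs on the left) then follow by the involution $n\mapsto -n$, which simply swaps $n^+$ and $n^-$.

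The main obstacle is merely the bookkeeping of the componentwise inequalities across sectors, with particular care at the transitional pairs $(2,1)$ and $(3,2)$, where the dominating generator changes and is not the one of smallest degree. Because $L$ has rank $2$ and each $n^\pm$ involves at most four coordinates, the verification is finite and entirely elementary, and it can be cross-checked by a Graver basis computation in software such as \texttt{4ti2}. A useful consistency check throughout is that each of the five listed binomials satisfies $\mathrm{supp}(n^+)\cap\mathrm{supp}(n^-)=\varnothing$, which is a necessary condition for primitivity.
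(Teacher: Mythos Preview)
Your approach is correct and genuinely different from the paper's. The paper simply invokes the software package \texttt{4ti2} to compute the Graver basis directly from the matrix $A=\begin{pmatrix}1&0&1&0\\0&1&2&3\end{pmatrix}$, so its proof is a one-line computer verification. You instead give an elementary lattice argument: parametrize $\ker A$ by $(a,b)\mapsto(a,2a-3b,-a,b)$, partition the $(a,b)$-plane by the signs of $a$, $b$, and $2a-3b$, and in every sector exhibit one of the five listed vectors as a conformal divisor. I checked your representative cases and they go through; the only place requiring care is that in the sector $a,b\ge 1,\ 2a>3b$ one must observe that these inequalities already force $a\ge 2$, so the comparison with $(2,1,-2,1)$ is legitimate. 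Combined with the pairwise non-domination of the five vectors and the involution $n\mapsto -n$, this does yield the full Graver basis. What your route buys is a self-contained argument that does not depend on trusting a software computation, at the cost of some bookkeeping; the paper's route is of course shorter. One small quibble: your closing remark that $\mathrm{supp}(n^+)\cap\mathrm{supp}(n^-)=\varnothing$ is a ``consistency check'' is tautological, since this holds by definition of the decomposition $n=n^+-n^-$.
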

\begin{proof}
We apply the software \texttt{4ti2} \cite{4ti2} in \texttt{Macaulay2} \cite{M2}:
\begin{verbatim}
loadPackage "FourTiTwo"
A = matrix{{1,0,1,0}, {0,1,2,3}}
toricGraver(A)
\end{verbatim}
The result consists of the five polynomials in the statement of the lemma.
\end{proof}
For any subset ${\mathcal I} \subset\{1,\dots, N\}$, we let $X_{\mathcal I}$ be the toric variety obtained from projecting $X_N$ to the $\p^{4|{\mathcal I}|}$ with coordinates $\{y_0\} \cup \{ y_{i,s} \}_{i \in \mathcal I, s =1, \ldots, 4}$. The ideal $I(X_{\mathcal I})
$ is naturally viewed as a subideal of $I(X_N)$.

\begin{theorem} \label{thm:2elemsubsets}
Using the notation introduced above, the ideal $I(X_N)$ is generated by the union of the generators of $I(X_{\{i,j\}})$ for $1\leq i <j \leq N$.
\end{theorem}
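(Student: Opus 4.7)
The plan is to apply \cref{lem:GraverDecomposition}: since a toric ideal is generated, as an ideal, by its Graver basis $G$, it suffices to prove that every element of $G$ already lies in $I(X_{\{i,j\}})$ for some pair $i<j$. Equivalently, no primitive binomial $B = y^\alpha - y^\beta \in I(X_N)$ may involve variables from three or more distinct index-blocks $\{y_{i,1}, y_{i,2}, y_{i,3}, y_{i,4}\}$.

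Suppose for contradiction that $B$ is primitive and that the set $\mathcal{I} \subseteq \{1,\dots,N\}$ of involved blocks has $|\mathcal{I}| \geq 3$. Primitivity forces $\alpha_0 \beta_0 = 0$, so without loss of generality $\alpha_0 = 0$. For each $i \in \mathcal{I}$, write $\alpha^{(i)}, \beta^{(i)}$ for the restrictions of $\alpha, \beta$ to the $i$-th block and set $w_i := |\alpha^{(i)}| - |\beta^{(i)}|$; homogeneity of $B$ yields $\sum_{i \in \mathcal{I}} w_i = \beta_0 \geq 0$. First, $w_i \neq 0$ for every $i \in \mathcal{I}$: otherwise $y^{\alpha^{(i)}} - y^{\beta^{(i)}} \in I(X_{\{i\}})$ is a nonzero homogeneous binomial whose two monomials divide those of $B$ and which differs from $B$ (because $|\mathcal{I}| \geq 3$), contradicting primitivity. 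Next, the $w_i$ cannot all share a sign: if all were positive, then $i^*$ with $w_{i^*} = \min_{i \in \mathcal{I}} w_i \leq \beta_0$ would make $R_{i^*} := y^{\alpha^{(i^*)}} - y^{\beta^{(i^*)}} y_0^{w_{i^*}} \in I(X_{\{i^*\}})$ a proper divisor of $B$; if all were negative, then $\beta_0<0$, a contradiction. Hence there exist $i \in \mathcal{I}$ with $w_i > 0$ and $j \in \mathcal{I}$ with $w_j < 0$.

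The core step is to extract from $B$ a two-block sub-binomial lying in $I(X_{\{i,j\}})$. I invoke the standard conformal-sum decomposition of Graver basis theory (\cite[Chapter 4]{sturmfels1996grobner}): the lattice element $\lambda^{(i)} := \alpha^{(i)} - \beta^{(i)}$ is a conformal sum of primitive elements of the per-block lattice $\Lambda_i$. By \cref{lemma:Graver}, every primitive element of $\Lambda_i$ has $w$-value in $\{-2,0,2\}$, so, as $w(\lambda^{(i)}) = w_i > 0$, at least one summand $\nu^{(i)}$ in this decomposition satisfies $w(\nu^{(i)}) = 2$. Analogously, one obtains $\nu^{(j)}$ in a conformal decomposition of $\lambda^{(j)}$ with $w(\nu^{(j)}) = -2$. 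Writing $\nu^{(i)} = \gamma^{(i)} - \delta^{(i)}$ and $\nu^{(j)} = \gamma^{(j)} - \delta^{(j)}$ as differences of non-negative vectors, the conformality with $\lambda^{(i)}$, respectively $\lambda^{(j)}$, forces $\gamma^{(i)} \leq \alpha^{(i)}$, $\delta^{(i)} \leq \beta^{(i)}$ (and similarly for $j$). The binomial $R := y^{\gamma^{(i)}} y^{\gamma^{(j)}} - y^{\delta^{(i)}} y^{\delta^{(j)}}$ is then homogeneous (since $w(\nu^{(i)}) + w(\nu^{(j)}) = 0$), lies in $I(X_{\{i,j\}}) \subset I(X_N)$, and properly divides $B$ while involving only the two blocks $\{i,j\}$---contradicting primitivity.

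The main obstacle is making this pairing step airtight: everything hinges on being able to match a $w=2$ summand of $\lambda^{(i)}$ against a $w=-2$ summand of $\lambda^{(j)}$ so that the resulting two-block relation requires no $y_0$ factor (which it cannot, since $\alpha_0 = 0$). This is exactly what \cref{lemma:Graver} provides by pinning the per-block Graver spectrum to $\{-2,0,2\}$; without such a uniform bound on $|w|$, a more delicate sub-element analysis in each $\Lambda_i$ would be required to arrange the cancellation.
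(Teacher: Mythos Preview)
Your proof is correct and follows essentially the same approach as the paper: both arguments show that every primitive binomial of $I(X_N)$ lies in some two-block ideal $I(X_{\{i,j\}})$ by using the per-block Graver basis from \cref{lemma:Graver} (whose degree-difference spectrum is $\{-2,0,2\}$) together with the conformal/Graver decomposition of lattice elements---your ``conformal-sum decomposition'' is exactly the content of the paper's \cref{lem:GraverDecomposition}. The only difference is organizational: you frame the argument as a contradiction from $|\mathcal{I}|\geq 3$ and dispose of the all-same-sign case up front via your divisor $R_{i^*}$, whereas the paper folds the analogous cases into its direct case analysis (cases (1), (2)(a), (2)(b)).
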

\begin{proof}
We will prove a stronger statement: that the union of the Graver bases for $I(X_{\{i,j\}})$, $1 \leq i < j \leq N$ is a Graver basis for $I(X_N)$. We write $G_N$ for the Graver basis of $I(X_N)$, and $G_{\mathcal I}$ for that of $I(X_{\mathcal I})$. We have $\bigcup_{\{i,j\}} G_{\{i,j\}} \subset G_N$ by construction. To prove the other inclusion, let us fix a binomial $m_1-m_2\in G_N$.
We have $\deg(m_1) = \deg(m_2)$, and need to show that~$m_1 - m_2 \in G_{\{i,j\}}$ for some $i,j$.
Without loss of generality we may write:
\[m_1=y_0^k\prod_i m_{1,i} ,\quad m_2=\prod_{i} m_{2,i} \quad\text{ with }\quad m_{1,i}=\prod_{s} y_{i,s}^{k_{i,s}},\quad m_{2,i}=\prod_{s} y_{i,s}^{\ell_{i,s}} .\]
Let us fix $i$. If there exists some $k_{i,s} \neq 0$, or some $\ell_{i,s} \neq 0$, then $m_{1,i}- m_{2,i}$ lies in $I(\tilde{X}_{\{i\}})$.

If we have $\sum_{s}k_{i,s} = \sum_{j} \ell_{i,s}$, then $m_{1,i}- m_{2,i} \in I(X_{\{i\}}) \subset I(X_N)$ and we must have that~$m_1 - m_2 \in G_{\{i\}} \subset G_{\{i,j\}}$ for any $j \neq i$. We may therefore assume  $\sum_{s}k_{i,s} \neq \sum_{s} \ell_{i,s}$.

Fix $i$ such that $\sum_{s}k_{i,s}  < \sum_{s} \ell_{i,s}$. Such an $i$ exists, because $k \geq 0$. By Lemma \ref{lem:GraverDecomposition} and since~$m_{1,i}- m_{2,i} \in I(\tilde{X}_{\{i\}})$,
there is an element $b = \mu_1 - \mu_2$ in the Graver basis of~$I(\tilde{X}_{\{i\}})$ such that $\mu_1 | \prod_{s} y_{i,s}^{k_{i,s}} $ and $\mu_2 | \prod_{s} y_{i,s}^{\ell_{i,s}} $
and $\deg(\mu_2) \geq \deg(\mu_1)$. By Lemma \ref{lemma:Graver}, the difference between the degrees of $\mu_1,\mu_2$ must be $\deg(\mu_2) - \deg(\mu_1) \in \{0,2\}$. We now have two cases:
\begin{enumerate}
\item If $\deg(\mu_2) = \deg(\mu_1)$, then the fact that $m_1 - m_2$ is primitive ensures that we have $m_1 - m_2=\mu_1-\mu_1 \in  I(X_{\{i\}}) \subset I(X_N)$.
 As before we conclude $m_1 - m_2 \in G_{\{i\}} \subset G_{\{i,j\}}$ for any $j \neq i$.
\item If $\deg(\mu_2) - \deg(\mu_1) = 2$, there are two subcases:
\begin{enumerate}
\item if $k \geq 2$, then, as $m_1-m_2$ is primitive, we must have $m_1 - m_2 = y_0^2\mu_1 - \mu_2 \in I(X_{\{i\}})$.   
Thus, $m_1 - m_2 \in G_{\{i\}} \subset G_{\{i,j\}}$ for any $j \neq i$,
\item if $k = 0$ or $k = 1$, there is $j \neq i$ such that $\sum_{s}k_{j,s}  > \sum_{s} \ell_{j,s}$. Analogously to the choice of $b$, by Lemma \ref{lem:GraverDecomposition}, we may find $b' = \mu_1' - \mu_2'$ in the Graver basis of $I(\tilde{X}_{\{j\}})$ with $\deg(\mu_1') - \deg(\mu_2') = 2$.
In this case the fact that $m_1 - m_2$ is primitive, implies that~$m_1 - m_2 = \mu_1\mu_1' - \mu_2\mu_2' \in G_{\{i,j\}}$.
\qedhere
\end{enumerate}
\end{enumerate}
\end{proof}

One computes that the toric ideal $I(X_{\{i,j\}})$ is generated by 10 binomials
\begin{equation} \label{eq:toricidealij}
\begin{array}{lll}
 y_{i,2}y_{i,3} - y_{i,1}y_{i,4}  & & y_{j,2}y_{j,3} - y_{j,1}y_{j,4} \\[0.3em]
y_0^2y_{i,4} - y_{i,2}^3 & & y_0^2y_{j,4} - y_{j,2}^3 \\[0.3em]
 y_0^2y_{i,3} - y_{i,1}y_{i,2}^2  &&   y_0^2y_{j,3} - y_{j,1}y_{j,2}^2 \\[0.3em]
 y_0^2y_{i,3}^2 - y_{i,1}^2y_{i,2}y_{i,4} & & y_0^2y_{j,3}^2 - y_{j,1}^2y_{j,2}y_{j,4} \\[0.3em]
 y_{i,1}^2y_{i,2}y_{i,4}y_{j,3}^2 - y_{i,3}^2y_{j,1}^2y_{j,2}y_{j,4} & &  y_{i,1}^3 y_{i,4}^2 y_{j,3}^2 - y_{i,3}^3 y_{j,1}^2y_{j,2}y_{j,4}.
\end{array}
\end{equation}
Theorem \ref{thm:2elemsubsets} provides a set of $5 N(N-1)$ generators for $I(X_N)$ by considering all 2-element subsets $\{i,j\} \subset \{1, \ldots, N\}$.


\subsection{Khovanskii basis}
We are now ready to show that the polynomials $\theta_0, \theta_{i,j}$ from \eqref{eq:paramY} form a Khovanskii basis for the subalgebra they generate:
\[S_\phi  \, = \, \C[s, s\theta_{1,1}, s \theta_{1,2}, s \theta_{1,3}, s\theta_{1,4}, \ldots, s\theta_{N,1}, s \theta_{N,2}, s \theta_{N,3}, s\theta_{N,4}] \subset \C[s,u_1,\ldots, u_N, v_1, \ldots, v_N].\]
Using Theorem \ref{thm:2elemsubsets}, we will show that this claim for general $N$ follows from the statement for~$N = 2$. We write $S_{ij}$ for the algebra
\[ S_{ij} \,=\, \C[s, s\theta_{i,1}, s \theta_{i,2}, s \theta_{i,3}, s\theta_{i,4}, s\theta_{j,1}, s \theta_{j,2}, s \theta_{j,3}, s\theta_{j,4}] \subset \C[s,u_i,v_i,u_j,v_j].\]
\begin{lemma} \label{lem:Khov1}
The polynomials $s\theta_0, s\theta_{i,1}, \ldots, s\theta_{i,4}, s\theta_{j,1}, \ldots,s\theta_{j,4}$ are a Khovanskii basis for the graded algebra $S_{ij}$ with respect to the monomial order \eqref{eq:monorder} restricted to $\C[s,u_i,v_i,u_j,v_j]$.
\end{lemma}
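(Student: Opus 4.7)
My plan is to apply the $S$-pair criterion of Sturmfels \cite[Corollary 11.5]{sturmfels1996grobner} (equivalently the lifting criterion \cite[Theorem 2.17]{Kaveh-Manon}): the set $G := \{s\theta_0\} \cup \{s\theta_{i,k}, s\theta_{j,k} : k = 1,\dots,4\}$ is a Khovanskii basis of $S_{ij}$ if and only if, for every binomial $b = y^{\alpha} - y^{\beta}$ in a generating set of the toric ideal of the leading monomials $\mathrm{init}_\preccurlyeq(G)$, the $S$-polynomial $b(G) := G^{\alpha} - G^{\beta}$ subducts to zero against $G$. By the parametrization \eqref{eq:paramY}, the toric ideal of these leading monomials is exactly $I(X_{\{i,j\}})$, whose ten binomial generators are displayed in \eqref{eq:toricidealij}. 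The proof thus reduces to verifying ten subductions.

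Eight of those ten binomials couple only the variables with index $i$ (respectively only those with index $j$) to $y_0$, so their $S$-polynomials lie entirely in $\C[s,u_i,v_i]$ (respectively $\C[s,u_j,v_j]$). The subduction of each to zero is exactly the single-oscillator calculation of Example \ref{ex:firstkhov}; no additional work is needed for them.

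The remaining two mixed binomials are
\[
\mathrm{B}_1 = y_{i,1}^2 y_{i,2} y_{i,4} y_{j,3}^2 - y_{i,3}^2 y_{j,1}^2 y_{j,2} y_{j,4}, \qquad \mathrm{B}_2 = y_{i,1}^3 y_{i,4}^2 y_{j,3}^2 - y_{i,3}^3 y_{j,1}^2 y_{j,2} y_{j,4}.
\]
I plan to compute $\mathrm{B}_k(G)$ directly. The algebraic identity $u_j^2(u_i^2+v_i^2) - u_i^2(u_j^2+v_j^2) = v_i^2 u_j^2 - u_i^2 v_j^2$ makes the expansion collapse to the closed form
\[
\mathrm{B}_k(G) = s^{5+k}\,u_i^{k+1}\,u_j^2\,(u_i^2+v_i^2)^k\,(u_j^2+v_j^2)\,(v_i^2 u_j^2 - u_i^2 v_j^2), \qquad k=1,2,
\]
which rearranges into a binomial difference of two products of elements of $G$:
\begin{align*}
\mathrm{B}_1(G) &= (s\theta_{i,3})^2\,(s\theta_{j,3})\,(s\theta_{j,1})^3 - (s\theta_{i,1})^3\,(s\theta_{i,3})\,(s\theta_{j,3})^2, \\
\mathrm{B}_2(G) &= (s\theta_{i,3})^3\,(s\theta_{j,3})\,(s\theta_{j,1})^3 - (s\theta_{i,1})^3\,(s\theta_{i,3})^2\,(s\theta_{j,3})^2.
\end{align*}
A single subduction step cancels the leading term of $\mathrm{B}_k(G)$ against the leading term of the summand whose initial monomial matches; the result is exactly the other summand, which is itself a product of $G$-elements and hence subducts to zero in one further trivial step.

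The main obstacle is the careful tracking of the order \eqref{eq:monorder} in the mixed case: the two summands in the identities above have \emph{different} leading monomials, while the leading monomial of $\mathrm{B}_k(G)$ itself arises only after a cancellation upon full expansion. One must therefore verify by direct inspection that the leading monomial of $\mathrm{B}_k(G)$ equals that of exactly one of the two summands and that subtracting this summand leaves the other. This is a purely combinatorial check on exponent vectors; once performed, Sturmfels' criterion yields the lemma immediately.
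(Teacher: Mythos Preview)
Your proof is correct and follows essentially the same route as the paper: both invoke the Kaveh--Manon/Sturmfels lifting criterion against the ten binomial generators of $I(X_{\{i,j\}})$ listed in \eqref{eq:toricidealij}, and your closed-form expressions for the two mixed $S$-polynomials coincide exactly with the paper's $\varphi_9$ and $\varphi_{10}$. The only cosmetic difference is that the paper writes out all eight single-index subductions explicitly (as $\varphi_1,\dots,\varphi_8$, each either zero or a single monomial in the $\theta$'s) rather than deferring to Example~\ref{ex:firstkhov}, which in fact only asserts the $N=1$ Khovanskii property without carrying out the subductions---so you should either supply those four easy checks yourself or point instead to the explicit list in the present proof.
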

\begin{proof}
 In the previous section, we computed that the 9 leading monomials in $u_i, v_i, u_j, v_j$ of the generators of $S_{ij}$ define a toric ideal with 10 generators. The generators are listed in Equation \eqref{eq:toricidealij}. By \cite[Theorem 2.17]{Kaveh-Manon}, it suffices to substitute $y_0 = h_0$ and $y_{i,j} = h_{i,j}$ into the binomials \eqref{eq:toricidealij}, and show that the resulting 10 polynomials $\varphi_1, \ldots, \varphi_{10} \in \C[s, u_i, v_i, u_j, v_j]$ lie in $S_{ij}$. We have
\[ \begin{array}{lll}
 \varphi_1 = \varphi_2 = 0, &
 \varphi_3 = \theta_{i,1}^2 \theta_{i,2}, &
 \varphi_4 = \theta_{j,1}^2\theta_{j,2}, \\[0.3em]
 \varphi_5 = \theta_{i,1}^3, &
 \varphi_6 = \theta_{j,1}^3, &
 \varphi_7 = \theta_{i,1}^3\theta_{i,3}, \\[0.3em]
 \varphi_8 = \theta_{j,1}^3\theta_{j,3},&
 \varphi_9 = \theta_{i,3}^2\theta_{j,1}^3\theta_{j,3} - \theta_{i,1}^3\theta_{i,3}\theta_{j,3}^2, &
 \varphi_{10} =  \theta_{i,3}^3\theta_{j,1}^3\theta_{j,3} - \theta_{i,1}^3 \theta_{i,3}^2\theta_{j,3}^2,
 \end{array}\]
and one checks directly that they are all in $S_{i,j}$
Here $\varphi_1, \ldots, \varphi_5$ come from the left column of~\eqref{eq:toricidealij}, and $\varphi_6, \ldots, \varphi_{10}$ from the right column.
\end{proof}
\begin{remark}
As an alternative to the above proof, one can check that $h_0, h_{i,j}$ form a Khovanskii basis for $S_{ij}$ using the Macaulay2 \cite{M2} package \texttt{SubalgebraBases}.
\end{remark}

Combining Theorem \ref{thm:2elemsubsets} and Lemma \ref{lem:Khov1}, we arrive at the following result.

\begin{corollary} \label{cor:Khov1}
The polynomials $s h_0, sh_{i,j}$ from \eqref{eq:paramY} form a Khovanskii basis for $S_\phi$ with respect to the monomial order $\eqref{eq:monorder}$.
\end{corollary}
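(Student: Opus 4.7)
The proof plan is to apply the Khovanskii-basis criterion of \cite[Theorem 2.17]{Kaveh-Manon} in essentially the same form as used for Lemma \ref{lem:Khov1}, but now to the full set of generators $sh_0, sh_{i,j}$ of $S_\phi$. That criterion reduces the Khovanskii-basis property to the following lifting test: for each binomial $m_1 - m_2$ in a generating set of the toric ideal $I(X_N)$ (the ideal of relations among the leading monomials of the $sh_{i,j}$), the polynomial obtained by substituting $y_0 \mapsto sh_0$ and $y_{i,s} \mapsto sh_{i,s}$ must lie in $S_\phi$.

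The key observation is that Theorem \ref{thm:2elemsubsets} tells us exactly which binomials we need to test: $I(X_N)$ is generated by the union over all pairs $1 \leq i < j \leq N$ of the $10$ binomials of $I(X_{\{i,j\}})$ exhibited in \eqref{eq:toricidealij}. Thus it suffices to check the lifting condition one pair $(i,j)$ at a time. But this is precisely what was established in Lemma \ref{lem:Khov1}: for each pair $(i,j)$, the $10$ substituted polynomials $\varphi_1,\dots,\varphi_{10}$ were shown to lie in the subalgebra $S_{ij}$. Since the monomial order \eqref{eq:monorder} restricts to the order used in Lemma \ref{lem:Khov1} on $\C[s,u_i,v_i,u_j,v_j]$, and since $S_{ij} \subset S_\phi$ by construction, these lifts automatically lie in $S_\phi$.

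Combining these two ingredients, every binomial in a generating set of $I(X_N)$ lifts to an element of $S_\phi$, so the criterion applies and $sh_0, sh_{i,j}$ form a Khovanskii basis of $S_\phi$. There is no substantive obstacle here: the real work was done in Theorem \ref{thm:2elemsubsets} (reducing to pairs via Graver bases) and Lemma \ref{lem:Khov1} (the two-oscillator lift). The only point to state cleanly is that the monomial order \eqref{eq:monorder} is compatible with passing to the subalgebras $S_{ij}$, which is immediate from the block structure of the matrix $M$ in \eqref{eq:monorder}.
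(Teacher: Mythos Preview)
Your proposal is correct and follows exactly the paper's own approach: the proof in the paper is the single sentence ``This follows from Theorem \ref{thm:2elemsubsets}, Lemma \ref{lem:Khov1} and \cite[Theorem 2.17]{Kaveh-Manon},'' and your write-up simply unpacks that sentence. The extra remarks you make (that $S_{ij}\subset S_\phi$ and that the monomial order restricts compatibly) are the natural details one fills in, and they are all immediate.
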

\begin{proof}
This follows from Theorem \ref{thm:2elemsubsets}, Lemma \ref{lem:Khov1} and \cite[Theorem 2.17]{Kaveh-Manon}.
\end{proof}

We can now finally prove our main theorem.
\begin{proof}[proof of Theorem \ref{thm:main}]
By Lemmas \ref{lem:lowerbound} and \ref{lem:upperbound}, we have $5^N \leq \delta_N \leq \deg(Y_N)$. By Corollary \ref{cor:Khov1}, the hypotheses of Theorem \ref{thm:mainkhov} are satisfied. Therefore, combining Theorem \ref{thm:mainkhov} and Lemma \ref{lem:volQN}, we conclude $\deg(Y_N) = {\rm vol}(Q_N) = 5^N$.
\end{proof}

We conclude the section with a generalization of Theorem \ref{thm:main} which allows to restrict the parameters to certain subvarieties of the parameter space, without altering the generic number of solutions. The general statement is that almost all parameter choices in an irreducible subvariety $V \subset \C^{2(N+3)N}$, not contained in the discriminant variety $\Delta$ from the Introduction, still gives $5^N$ solutions. The following corollary identifies some natural candidates for $V$.
\begin{corollary}
Let $V \subset \C^{2(N+3)N}$ be any irreducible subvariety containing the linear space $\{ c_{j,i} = 0, d_{j,i} = 0 \}$ where all parameters $c$ and $d$ vanish. Then, for almost all choices of parameters $ (a_i,b_i,c_i,d_i) \mid i = 1, \ldots, N)  \in V$, the number of complex solutions to \eqref{eq:N-sec5} is $5^N$.
\end{corollary}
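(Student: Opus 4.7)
The plan is to combine Lemma \ref{lem:LSC} with the explicit decoupled instance used to prove Lemma \ref{lem:lowerbound}. Let $L \subset \C^{2(N+3)N}$ denote the linear subspace where all coupling parameters vanish, i.e.\ $L = \{c_{j,i} = 0,\ d_{j,i} = 0 \text{ for all } i \neq j\}$. By hypothesis $L \subset V$. Write $\Phi: \C^{2(N+3)N} \to \Z_{\geq 0}$ for the function sending a parameter tuple to the number of isolated complex solutions of \eqref{eq:N-sec5}; this is the same $\Phi$ to which Lemma \ref{lem:LSC} applies.

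The first step will be to produce a witness point. On $L$ the system $\mathcal F_N = 0$ decouples into $N$ independent copies of the single-oscillator system \eqref{eq:N=1}. By Theorem \ref{thm:mainN=1}, for a general choice of the parameters $(a_i,b_i)$, each single-oscillator factor has exactly $5$ isolated complex solutions; taking Cartesian products of these zero-dimensional solution sets yields a parameter point $p_0 \in L \subset V$ with $\Phi(p_0) = 5^N$.

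The second step will be to propagate this witness to a generic point of $V$. By Lemma \ref{lem:LSC}, the set $\{p \in \C^{2(N+3)N} : \Phi(p) \leq 5^N - 1\}$ is Zariski closed, so its complement intersected with $V$, namely $U := \{p \in V : \Phi(p) \geq 5^N\}$, is Zariski open in $V$. Since $p_0 \in U$, the set $U$ is non-empty, and irreducibility of $V$ forces $U$ to be Zariski dense in $V$. Finally, Theorem \ref{thm:main} (as remarked in the paper immediately after its statement) gives the global upper bound $\Phi(p) \leq 5^N$ for every parameter $p$, so on the dense open subset $U \subset V$ one has $\Phi(p) = 5^N$, which is exactly the claim.

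I do not expect a serious obstacle here. The only point that requires genuine checking is that the Cartesian product of $N$ decoupled zero-dimensional schemes is itself a zero-dimensional scheme of length $5^N$, so that $p_0$ really contributes $5^N$ to $\Phi$; this follows at once from the product structure of the Jacobian at $c=d=0$. Everything else is a standard combination of lower semicontinuity of $\Phi$ with irreducibility of $V$, so the corollary reduces cleanly to the case $V = \C^{2(N+3)N}$ handled by Theorem \ref{thm:main}.
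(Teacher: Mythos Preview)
Your argument is correct and follows essentially the same route as the paper: both produce a witness point on the decoupled locus $L\subset V$ with $5^N$ solutions (via Theorem \ref{thm:mainN=1} and the proof of Lemma \ref{lem:lowerbound}), then use irreducibility of $V$ together with the fact that the bad locus is Zariski closed to conclude. The only cosmetic difference is that the paper phrases the closedness step via the discriminant $\Delta$, whereas you invoke Lemma \ref{lem:LSC} directly; since $\Phi\leq 5^N$ everywhere, these amount to the same thing.
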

\begin{proof}
The discriminant variety $\Delta \subset  \C^{2(N+3)N}$ contains all parameter values for which the number of solutions is not $\delta_N = 5^N$. By our observation in the proof of Lemma \ref{lem:lowerbound}, $V \setminus \Delta$ is non-empty. By irreducibility, $V \cap \Delta$ is a subvariety of $V$ whose complement is dense in $V$.
\end{proof}
The meaning of this corollary is that, if we put constraints in the $c$ and $d$ parameters given by some polynomial equations $\varphi(c,d) = 0$ satisfying $\varphi(0,0)=0$, then almost all choices of parameters under this constraint still give $5^N$ solutions. For instance, $\varphi$ could impose some of the $(c,d)$ variables to be equal to each other, or to be equal to zero.

%

\section{Numerical nonlinear algebra} \label{sec:6}

We use \cref{thm:main} for computations. Recall from \cref{eq:N} that $\mathcal F_N=0$ is a system of~$2N$ equations in the $2N$ variables $(u,v) = (u_1,\ldots,u_N,v_1,\ldots,v_N)$ depending on $2(N+3)N$ parameters $p:=\big((a_i,b_i,c_i,d_i)\mid  i= 1,\ldots, N\big)\in\mathbb C^{2(N+3)N}$. To emphasize these dependencies let us write in this section
$$\mathcal F_N(u,v; p) =
\begin{bmatrix}a_{1,i} u_i \, (u_i^2+v_i^2) + a_{2,i} \, u_i + a_{3,i} \, v_i + a_{4,i} + \sum_{j\ne i} c_{j,i}v_j,\\
b_{1,i} \, v_i \, (u_i^2+v_i^2) + b_{2,i} \, u_i \, + b_{3,i} \, v_i \, + b_{4,i} +  \sum_{j\ne i} d_{j,i}u_j
\end{bmatrix}_{1\leq i \leq N}.
$$

By \cref{thm:main}, for almost all choices of parameters $p\in\mathbb C^{2(N+3)N}$ the number of complex solutions of $\mathcal F_N(u,v; p)=0$ is
$5^N.$ More specifically, by \cref{lem:LSC}, the parameters for which this is not the case are contained in a proper algebraic subvariety $\Delta\subsetneq\mathbb C^{2(N+3)N}$ (which we called the discriminant) and for these parameters the number of isolated solutions can only decrease.

\begin{table}
\begin{center}
\underline{Experiment 1: Algorithm \ref{alg1}}

\smallskip

\begin{tabular}{|llllll|}
\hline
$N$ & $5^N$ && setting up start system & solving target system & success rate\\
\hline
$2$ & $25$ && $0.26\,\mathrm{s}$ & $0.006\,\mathrm{s}$ & $100\%$
\\
$3$ & $125$ && $0.26\,\mathrm{s}$ & $0.14\,\mathrm{s}$& $100\%$
\\
$4$ & $625$ && $0.26\,\mathrm{s}$ & $1.8\,\mathrm{s}$& $100\%$
\\
$5$ & $3125$ && $0.26\,\mathrm{s}$ & $14\,\mathrm{s}$&$100\%$
\\
$6$ & $15625$ && $0.27\,\mathrm{s}$ & $105.37\,\mathrm{s}$&$100\%$
\\
\hline
\end{tabular}

\bigskip

\underline{Experiment 2: Algorithm \ref{alg2}}

\smallskip

\begin{tabular}{|llllll|}
\hline
$N$ & $5^N$ && setting up start system & solving target system & success rate\\
\hline
$2$ & $25$ && $0.24\,\mathrm{s}$ & $0.006\,\mathrm{s}$ & $92\%$
\\
$3$ & $125$ && $0.25\,\mathrm{s}$ & $0.04\,\mathrm{s}$ & $98\%$
\\
$4$ & $625$ && $0.25\,\mathrm{s}$ & $0.6\,\mathrm{s}$& $98\%$
\\
$5$ & $3125$ && $0.25\,\mathrm{s}$ & $3.76\,\mathrm{s}$&$99\%$
\\
$6$ & $15625$ && $0.34\,\mathrm{s}$ & $54.3\,\mathrm{s}$&$99\%$
\\
\hline
\end{tabular}
\end{center}
\caption{We solve $\mathcal F_N((u,v); p)=0$ using Algorithms \ref{alg1} and \ref{alg2}, where $p\in\mathbb C^{2(N+3)N}$ is chosen with independent standard normal entries. The first table is for Algorithm \ref{alg1}. The second is for Algorithm \ref{alg2}. The tables show the values of $N$, corresponding computations times, and how many zeros were successfully computed.\label{table}.}
\end{table}

Fix parameters $p \in \C^{2(N+3)N}$. We want to find \emph{all} solutions of ${\mathcal F}_N(u,v;p) = 0$. The idea of homotopy continuation is to pick different parameters $p_0 \in\mathbb C^{2(N+3)N}\setminus \Delta$, such that we can \emph{easily} compute the solutions of ${\mathcal F}_N(u,v;p_0)=0$. Then, we connect $p_0$ and $p$ by a smooth path $\gamma(t)$ with $\gamma(1)=p_0$ and $\gamma(0)=p$.
This defines the \emph{homotopy} $H((u,v),t) = \mathcal F_N(u,v; \gamma(t))$. The solutions describe continuous paths $u(t), v(t)$ for $t$ going from 1 to 0, which satisfy $H((u(t),v(t)),t) = 0$. Taking the total derivative with respect to $t$ gives us the ODE
\[ \frac{\partial H((u,v),t)}{\partial t}  + \frac{\mathrm d H((u,v),t) }{\mathrm d (u,v)} \cdot \begin{bmatrix}\dot u \\ \dot v\end{bmatrix} = 0. \]
Here $\tfrac{\mathrm d H((u,v),t) }{\mathrm d (u,v)}$ is the Jacobian matrix of $H$ with respect to the variables $u,v$. By construction, we have at $t=1$ that $H((u,v),1)=\mathcal F_N((u,v); p_0)$. Any solution $(u^*,v^*)$ of $\mathcal F_N((u,v); p_0)=0$ yields an initial value problem with initial conditions $u(1) = u^*, v(1) = v^*$ that can be solved using standard numerical ODE methods. We also say that we \emph{track} the \emph{start solution} $(u^*,v^*)$ along the path $\gamma(t)$ when we solve this initial value problem for $u(t), v(t)$. The desired \emph{target solution} is $u(0), v(0)$.
Since $\Delta$ is of complex codimension (at least) 1, it is of real codimension (at least)~2, which means that we can always find a path $\gamma(t)$ that does not intersect $\Delta$ for~$0<t\leq 1$. For such paths, the above initial value problems are well-posed\footnote{Well-posed except at maybe $t=0$. If the problem is ill-posed at $t=0$, this means $p \in \Delta$ and one can use so-called \emph{endgames}; see~\cite[Section 10]{sommese2005numerical}.}. For more details, see the textbook by Sommese and Wampler \cite{sommese2005numerical}.

One way to choose the path $\gamma(t)$ comes from the toric degeneration induced by our monomial order \eqref{eq:monorder}. This was worked out by Burr, Sottile and Walker in \cite{burr2020numerical}. Their method computes the solutions of the \emph{start system} $\mathcal F_N((u,v); p_0)=0$ via a \emph{polyhedral homotopy}, which relies on polyhedral computations in dimension $2N$. This can be quite demanding for increasing $N$.

In the setting of our paper we obtain the following more efficient way for setting up the homotopy. Theorem \ref{thm:mainN=1} implies that general parameters of the form
$$p_0=\big((a_i,b_i,0,0)\mid  i= 1,\ldots, N\big)\in\mathbb C^{2(N+3)N},$$
i.e., general parameters such that the coupling coefficients are zero, yield $5^N$ solutions. Notice that these $5^N$ solutions are given as a cartesian product of $N$ sets of $5$ solutions. Consequently, we can set up the start system and compute its solutions by solving separately $N$ systems of $2$ equations in $2$ variables. The complexity is \emph{linear} in $N$! We use this strategy for a homotopy continuation algorithm that we call \emph{Decoupled Homotopy}. It is summarized in Algorithm \ref{alg1}.

\begin{algorithm}
\caption{Decoupled Homotopy\label{alg1}}
\SetAlgoLined
\KwIn{A list of parameters $p \in\mathbb C^{2(N+3)N}$.}
\KwOut{The complex zeros of $\mathcal F_N((u,v);p)$.}
Sample $(a_i,b_i,c_i,d_i \mid i=1,\ldots,N)\in \mathbb C^{2(N+3)N}$ by choosing independently real and imaginary parts from the standard normal distribution\;
\For{$i=1,\ldots,N$}{
Solve
$$\begin{bmatrix}
\, a_{1,i} u_i \, (u_i^2+v_i^2) + a_{2,i} \, u_i + a_{3,i} \, v_i + a_{4,i}\, \\
\, b_{1,i} \, v_i \, (u_i^2+v_i^2) + b_{2,i} \, u_i \, + b_{3,i} \, v_i \, + b_{4,i}\,
\end{bmatrix} =0,$$
and let $s_{i,1},\ldots,s_{i,5}$ be the 5 computed solutions\;
}
Set $S_0:=\{(s_{i_1,1}, \ldots, s_{i_5,5})\mid (i_1,\ldots,i_N)\in\{1,\ldots,5\}^{\times N}\}$\;
Set $p_0:=(a_i,b_i,0,0\mid i =1,\ldots,N)$\;
Set $p_1:=(a_i,b_i,c_i,d_i\mid i =1,\ldots,N)$\;
\For{$s_0 \in S_0$}{
Use homotopy continuation to track $s_0$
from $p_0$ to $p_1$ along $\gamma(t)=tp_0+(1-t)p_1$.
Let $s_1$ be the computed solution\;
Use homotopy continuation to track $s_1$
from $p_1$ to $p$ along $\gamma(t) = tp_1+(1-t)p$.\quad
Let $s$ be the computed solution\;
Return $s$\;
}
\end{algorithm}

Due to step 1 of Algorithm \ref{alg1}, $p_0$ is general with probability one. Furthermore, the two homotopies in steps 11 \& 12 make sure that the (piecewise) linear path from $p_0$ to $p$ does not intersect the discriminant $\Delta$ with probability one (see \cite[Section 7]{sommese2005numerical}).

We use \texttt{HomotopyContinuation.jl} \cite{HC.jl} to experiment with Algorithm \ref{alg1}. The code is attached to the \texttt{ArXiv} version of this article. For $2\leq N\leq 6$ we choose \emph{real} parameters $p\in\mathbb R^{2(N+3)N}$ by sampling the entries independently from the standard normal distribution. Then, we apply Algorithm \ref{alg1} to solve $\mathcal F_N((u,v);p)=0$. The computation times in \Cref{table} were obtained on a laptop with 2,3 GHz Dual-Core Intel Core i5 and 8 GB Memory. The time it takes to set up the start system increases minimally, because the computational complexity of computing the start system is linear in $N$. This shows the strength of the Decoupled Homotopy.

Algorithm \ref{alg1} works correctly with probability one. We think of an alternative algorithm: Consider parameters $p=(a_i,b_i,c_i,d_i \mid i=1,\ldots,N)\in \mathbb C^{2(N+3)N}$. Instead of sampling the start parameter $p_0$ and the middle parameter $p_1$, we take $p_0=(a_i,b_i,0,0 \mid i=1,\ldots,N)\in \mathbb C^{2(N+3)N}$. That is, we obtain the starting parameters $p_0$ from the target parameters $p$ by simply setting the $c$ and $d$ coordinates to $0$. We then track the solutions of $\mathcal F_N((u,v); p_0)=0$ along the path $tp_0 + (1-t)p$. This gives Algorithm \ref{alg2}.
The last column of \Cref{table} indicates that Algorithm \ref{alg2} can't compute all zeros of $\mathcal F_N((u,v); p)$, but it outperforms Algorithm~\ref{alg1} in terms of computation times.

\begin{algorithm}
\caption{Decoupled Homotopy (Heuristic Version)\label{alg2}}
\SetAlgoLined
\KwIn{A list of parameters $p=(a_i,b_i,c_i,d_i \mid i=1,\ldots,N)\in \mathbb C^{2(N+3)N}$.}
\KwOut{The complex zeros of $\mathcal F_N((u,v);p)$.}
\For{$i=1,\ldots,N$}{
Solve
$$\begin{bmatrix}
\, a_{1,i} u_i \, (u_i^2+v_i^2) + a_{2,i} \, u_i + a_{3,i} \, v_i + a_{4,i}\, \\
\, b_{1,i} \, v_i \, (u_i^2+v_i^2) + b_{2,i} \, u_i \, + b_{3,i} \, v_i \, + b_{4,i}\,
\end{bmatrix} =0,$$
and let $s_{i,1},\ldots,s_{i,5}$ be the 5 computed solutions\;
}
Set $S_0:=\{(s_{i_1,1}, \ldots, s_{i_5,5})\mid (i_1,\ldots,i_N)\in\{1,\ldots,5\}^{\times N}\}$\;
Set $p_0:=(a_i,b_i,0,0\mid i =1,\ldots,N)$\;
\For{$s_0 \in S_0$}{
Use homotopy continuation to track $s_0$
from $p_0$ to $p$ along $\gamma(t) = tp_0+(1-t)p$.\quad Let $s$ be the computed solution\;
Return $s$\;
}
\end{algorithm}


\printbibliography

\end{document}